\newcommand{\arxiv}[1]{\href{http://arxiv.org/abs/#1}{\texttt{arXiv:#1}}}
\theoremstyle{plain}
\newtheorem{theorem}{Theorem}
\newtheorem{lemma}[theorem]{Lemma}
\newtheorem{corollary}[theorem]{Corollary}
\newtheorem{proposition}[theorem]{Proposition}
\newtheorem{observation}[theorem]{Observation}
\theoremstyle{definition}
\newtheorem{definition}[theorem]{Definition}
\newtheorem{question}[theorem]{Question}
\theoremstyle{remark}
\newtheorem{remark}[theorem]{Remark}
\newtheorem*{rep@theorem}{\rep@title}
\newcommand{\newreptheorem}[2]{%
\newenvironment{rep#1}[1]{%
 \def\rep@title{#2 \ref{##1}}%
 \begin{rep@theorem}}%
 {\end{rep@theorem}}}
\newcommand{\ie}{i.e.,~} %
\def\EE{\mathbb{E}}
\def\RR{\mathbb{R}}
\def\PP{\mathbb{P}}
\def\RR{\mathbb{R}}
\def\ZZ{\mathbb{Z}}
\newcommand{\trans}[1]{{#1}^\top}
\newcommand{\pol}[1]{{#1}^\circ}
\DeclareMathOperator{\Vol}{Vol}
\DeclareMathOperator{\conv}{conv}
\newcommand{\li}[2]{\ell_{#1,#2}} %
\DeclareMathOperator{\pr}{rank_+}
\DeclareMathOperator{\ec}{xc}
\DeclareMathOperator{\ic}{ic}
\newcommand{\sprod}[2]{\langle {#1} , {#2} \rangle} %
\newcommand{\defn}[1]{\emph{\color{blue} #1}} %
\newcommand{\set}[2]{\ensuremath{\left\{#1\,\middle|\,#2\right\}}} 
\newcommand{\smallset}[2]{\ensuremath{\{#1\mid#2\}}} 
\newcommand{\ffloor}[2]{\left\lfloor{\frac{#1}{#2}}\right\rfloor} %
\newcommand{\fceil}[2]{\left\lceil {\frac{#1}{#2}} \right\rceil} %
\let\@fnsymbol\@arabic
\title{\bf Polygons as sections
of higher-dimensional polytopes}
\author{Arnau Padrol\thanks{Supported by the DFG
Collaborative Research Center SFB/TR~109 ``Discretization
in Geometry and Dynamics''.}\\
\small Institut f\"ur Mathematik\\[-0.8ex]
\small Freie Universit\"at Berlin\\[-0.8ex] 
\small Berlin, Germany\\
\small\tt arnau.padrol@fu-berlin.de\\
\and
Julian Pfeifle\thanks{Supported by grants COMPOSE
  (EUI-EURC-2011-4306), MTM~2012-30951, MTM~2011-24097 and 2009-SGR-1040.} \\
\small Dept.\ Matem\`atica Aplicada II\\[-0.8ex]
\small Universitat Polit\`ecnica de Catalunya\\[-0.8ex]
\small Barcelona, Spain\\
\small\tt julian.pfeifle@upc.edu
}
\date{}
\begin{document}

\maketitle

\begin{abstract}
We show that every heptagon is a section of a $3$-polytope with $6$ vertices.
This implies that every $n$-gon with $n\geq 7$ can be obtained as a section of a
$(2+\ffloor{n}{7})$-dimensional polytope with at most $\fceil{6n}{7}$ vertices; and
 provides a geometric proof of the fact that every nonnegative
$n\times m$ matrix of rank~$3$ has nonnegative
rank not larger than $\fceil{6\min(n,m)}{7}$. This result has been independently
proved, algebraically, by Shitov (J. Combin. Theory Ser. A 122,
2014).

\bigskip\noindent \textbf{Keywords:} polygon; polytope projections and sections; extension complexity; nonnegative rank; nonrealizability; pseudo-line arrangements
\end{abstract}

\section{Introduction}

Let $P$ be a (convex) polytope. 
An \defn{extension} of~$P$ is any polytope~$Q$ such that~$P$ is the image of $Q$ under a linear projection;
 the \defn{extension
complexity} of~$P$, denoted~\defn{$\ec(P)$}, is the minimal number of facets 
of an extension of~$P$. This concept is relevant in combinatorial optimization
because if a polytope has low extension complexity, then it is possible to
use an extension with few facets to efficiently optimize a linear
functional over it.

A \defn{section} of a polytope is its intersection with an affine subspace.
We will work with the polar formulation of the problem above, which
asks for the minimal number of vertices of a polytope $Q$ that has $P$ as a section.
If we call this quantity the \defn{intersection complexity} of $P$, \defn{$\ic(P)$}, 
then by definition it holds that $\ic(P)=\ec(\pol P)$, where $\pol P$ is the polar dual of~$P$.
However, extension complexity is preserved under polarity 
(see~\cite[Proposition~2.8]{ThomasParriloGouveia2013}),
so these four quantities actually coincide:
\[\ic(P)=\ec(\pol P)=\ec(P)=\ic(\pol P).\]

Despite the increasing amount of attention that this topic has received recently
(see \cite{FioriniRothvosTiwary2012}, \cite{ThomasParriloGouveia2013}, \cite{GouveiaRobinsonThomas2013}, \cite{Shitov2014} and references therein),
it is still far from being well understood. For example, even the possible range of values of the intersection complexity of an $n$-gon is still unknown.
Obviously, every $n$-gon has intersection complexity at most~$n$, and for those with 
$n\leq 5$ it is indeed exactly~$n$. It is not hard to
check that hexagons can have complexity $5$ or $6$
(cf. \cite[Example~3.4]{ThomasParriloGouveia2013}) and, as we show in Proposition~\ref{prop:ichexagon},
it is easy to decide which is the exact value.

By proving that a certain pseudo-line arrangement is not stretchable, we show that
every heptagon is a section of a $3$-polytope with no more than $6$ vertices.
This reveals the geometry behind a result found independently by 
Shitov in~\cite{Shitov2014}, 
and allows us to settle the intersection complexity of heptagons.

\begin{reptheorem}{thm:icheptagon}
Every heptagon has intersection complexity $6$.
\end{reptheorem}

In general, the minimal intersection complexity of an $n$-gon is $\theta(\log n)$,
which is attained by regular $n$-gons \cite{BenTalNemirovski2001}\cite{FioriniRothvosTiwary2012}.
On the other hand, there exist $n$-gons whose intersection complexity is at least $\sqrt{2n}$~\cite{FioriniRothvosTiwary2012}.
As a consequence of Theorem~\ref{thm:icheptagon} we automatically get upper bounds for the
complexity of arbitrary $n$-gons.

\begin{reptheorem}{thm:icngon}
Any $n$-gon with $n\geq 7$ is a section of a $(2+\ffloor{n}{7})$-dimensional
polytope with at most $\fceil{6n}{7}$ vertices. In particular, $\ic(P)\leq
\fceil{6n}{7}$.
\end{reptheorem}

Of course, this is just a first step towards understanding the
intersection complexity of polygons. By counting degrees of freedom, it is
conceivable that every $n$-gon could be represented as a section of an $O(\sqrt{n})$-dimensional polytope
with $O(\sqrt{n})$ vertices. For sections of $3$-polytopes, our result only
shows that every $n$-gon is a section of a $3$-polytope with not more than $n-1$ vertices, whereas
we could expect an order of $\frac23 n$ vertices.

There is an alternative formulation of these results. The \defn{nonnegative rank} of 
a nonnegative $n\times m$ matrix $M$,
denoted \defn{$\pr(M)$}, is the minimal number~$r$ such that there
exists an $n\times r$ nonnegative matrix~$R$ and an $r\times m$ nonnegative matrix
$S$ such that \(M=RS.\) 
A classical result of Yannakakis~\cite{Yannakakis1991}
states that the intersection complexity of a polytope coincides with the
{nonnegative rank} of its slack matrix. 
In this setting, it is not hard to
deduce the following theorem from Theorem~\ref{thm:icngon} (it is easy to
deal with matrices of rank~$3$ that are not slack
matrices).

\begin{theorem}[{{\cite[Theorem~3.2]{Shitov2014}}}]\label{thm:nonnegativerank}
 Let $M$ be a nonnegative $n\times m$ matrix of rank~$3$. %
 Then $\pr(M)\leq \fceil{6\min(n,m)}{7}$.
\end{theorem}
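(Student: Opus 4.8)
The plan is to reduce a general rank-$3$ nonnegative matrix to the slack matrix of a polygon, and then invoke Theorem~\ref{thm:icngon} together with Yannakakis' theorem \cite{Yannakakis1991}. Since the nonnegative rank is invariant under transposition ($M=RS$ iff $\trans M=\trans S\,\trans R$), I may assume $n=\min(n,m)\le m$, so that the claimed bound reads $\pr(M)\le\fceil{6n}{7}$; the nonnegative rank is also unchanged by deleting zero rows and columns, so I assume $M$ has none. Fixing a rank factorization $M=AB$ with $A\in\RR^{n\times 3}$ and $B\in\RR^{3\times m}$, the rows $\hat p_1,\dots,\hat p_n\in\RR^3$ of $A$ and the columns $\hat\ell_1,\dots,\hat\ell_m\in\RR^3$ of $B$ satisfy $M_{ij}=\sprod{\hat p_i}{\hat\ell_j}\ge 0$. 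Writing $C=\cone(\hat p_1,\dots,\hat p_n)$, nonnegativity says precisely that every $\hat\ell_j$ lies in the dual cone $C^{*}$. Because the $\hat\ell_j$ span $\RR^3$ (as $\rank M=3$), the cone $C^{*}$ is full-dimensional, hence $C$ is pointed; and $C$ is $3$-dimensional for the same reason. Thus $C$ is the cone over a genuine polygon $P$, whose vertices $v_1,\dots,v_{n'}$ (with $n'\le n$) correspond to the extreme rays of $C$ and whose edges correspond to the extreme rays $\hat g_1,\dots,\hat g_{n'}$ of $C^{*}$.

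The reduction now proceeds in two monotonicity steps, each composing a nonnegative factorization with a nonnegative matrix. First, every $\hat p_i$ is a nonnegative combination of the extreme generators $v_t$, so there is a nonnegative $n\times n'$ matrix $\Lambda$ with $M=\Lambda\,M^{\mathrm{vert}}$, where $M^{\mathrm{vert}}$ is the $n'\times m$ matrix whose rows are indexed by the $v_t$; hence $\pr(M)\le\pr(M^{\mathrm{vert}})$. Second, each $\hat\ell_j\in C^{*}$ is a nonnegative combination $\hat\ell_j=\sum_e N_{ej}\,\hat g_e$ of the edge functionals, which gives $M^{\mathrm{vert}}=S_P\,N$ for a nonnegative $n'\times m$ matrix $N$, where $S_P$ is a slack matrix of $P$ (its $(t,e)$ entry being $\sprod{v_t}{\hat g_e}$, a genuine slack matrix up to positive rescaling of rows and columns, an operation that leaves the nonnegative rank unchanged); hence $\pr(M^{\mathrm{vert}})\le\pr(S_P)$. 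Chaining these with Yannakakis' identity $\pr(S_P)=\ic(P)$ yields $\pr(M)\le\ic(P)$, where $P$ is a polygon with at most $n$ vertices.

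It then remains to bound $\ic(P)$. If $P$ has $n'\ge 7$ vertices, Theorem~\ref{thm:icngon} gives $\ic(P)\le\fceil{6n'}{7}\le\fceil{6n}{7}$ by monotonicity; if $n'\le 6$, then $\ic(P)\le n'\le 6$, and since $\fceil{6n}{7}=n\ge n'$ for $n\le 6$ while $\fceil{6n}{7}\ge 6\ge n'$ for $n\ge 7$, the bound survives in every remaining range. Either way $\pr(M)\le\fceil{6\min(n,m)}{7}$, as desired. The only content beyond Theorem~\ref{thm:icngon} is the geometric dictionary of the first paragraph, namely identifying $C$ with the cone over a polygon and $C^{*}$ with the cone generated by the edge functionals, together with the observation that pointedness and full-dimensionality are automatic once $\rank M=3$; this is exactly what makes the passage from arbitrary rank-$3$ matrices to honest slack matrices routine. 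The two inequalities $\pr(M)\le\pr(M^{\mathrm{vert}})\le\pr(S_P)$ are the crux, and each follows simply by left- or right-multiplying a nonnegative factorization by a nonnegative matrix, so I expect no serious obstacle here.
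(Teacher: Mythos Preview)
Your proof is correct and is precisely the reduction the paper leaves implicit: the paper only asserts that Theorem~\ref{thm:nonnegativerank} follows from Theorem~\ref{thm:icngon} and that ``it is easy to deal with matrices of rank~$3$ that are not slack matrices,'' without spelling out the passage. Your cone/dual-cone argument, with the two monotonicity inequalities $\pr(M)\le\pr(M^{\mathrm{vert}})\le\pr(S_P)$ and the case analysis for $n'\le 6$, is exactly the standard way to carry out this reduction.
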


This disproved a conjecture of
Beasley and Laffey (originally stated in \cite[Conjecture~3.2]{BeasleyLaffey2009} in a more
general setting), who asked if for any $n\geq 3$ there is an $n\times n$ nonnegative matrix $M$ of rank~$3$ with $\pr(M)=n$. While this paper was under review, Shitov improved Theorem~\ref{thm:icngon} and provided a sublinear upper bound for the intersection/extension complexity of $n$-gons~\cite{Shitov2014b}.

\subsection{Notation}
We assume throughout that the vertices $\set{p_i}{i\in \ZZ/n\ZZ}$
of every $n$-gon $P$ are cyclically clockwise labeled, \ie the edges of $P$
are $\conv\{p_i,p_{i+1}\}$ for $i\in\ZZ/n\ZZ$
and the triangles $(p_{i+2},p_{i+1},p_i)$ are positively oriented for $i\in\ZZ/n\ZZ$.

We regard the $p_i$ as points of the Euclidean plane $\EE^2$, embedded in the
real projective plane $\PP^2$ as $\EE^2 = 
\smallset{\trans{(x, y, 1)}}{ x, y \in\RR}$. For any pair of points $p, q \in
\EE^2$, we denote by $\li{p}{q} = p \wedge q$ the line joining them. It is well
known that $\li{p}{q}$ can be identified with the point $\li{p}{q} = p \times q$
in the dual space $(\PP^2)^*$, where
\[p\times q =\trans{\left( \begin{vmatrix}p_2& p_3\\q_2&q_3\end{vmatrix},\; -\begin{vmatrix}p_1& p_3\\q_1&q_3\end{vmatrix},\; \begin{vmatrix}p_1& p_2\\q_1&q_2\end{vmatrix} \right)}\]
 denotes the cross-%
 product in Euclidean $3$-space. Similarly, the meet
$\ell_1\vee\ell_2$ of two lines $\ell_1,\ell_2\in (\PP^2)^*$ is their
intersection point in $\PP^2$, which has coordinates~$\ell_1\times\ell_2$.

\section{The intersection complexity of hexagons}
As an introduction for the techniques that we use later with heptagons, we study the intersection complexity of hexagons. Hexagons can have intersection
complexity either~$5$ or $6$~\cite[Example~3.4]{ThomasParriloGouveia2013}. In this section we provide a geometric condition to decide among the two values. 
This section is mostly independent from the next two, and the reader can safely skip it.

First, we introduce a lower bound for the $3$-dimensional intersection complexity of $n$-gons that we will use later.

\begin{proposition}\label{prop:ic3bound}
No $n$-gon can be obtained as a section of a $3$-polytope with less than $\fceil{n+4}{2}$ vertices. 
\end{proposition}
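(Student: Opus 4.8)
The plan is to bound the number of facets of the ambient $3$-polytope from below by $n$, and then invoke the Euler inequality $f\le 2v-4$ valid for every $3$-polytope. Let $Q$ be a $3$-polytope with $v$ vertices, $e$ edges and $f$ facets, and let $H$ be an affine plane with $P=Q\cap H$ an $n$-gon. Since $Q$ is convex, $P$ is a convex polygon whose boundary lies in $\partial Q$, so each edge of $P$ is contained in some facet of $Q$. The heart of the argument is that the assignment sending an edge of $P$ to a facet of $Q$ containing it is \emph{injective}. To see this, observe that if $E\subset F$ for a facet $F$, then the outer normal of $E$ within $H$ is the orthogonal projection onto $\lin(H)$ of the outer normal of $F$; hence any two edges of $P$ lying in a common facet would share the same outer normal. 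As the edges of a convex polygon have pairwise distinct outer normals, each facet of $Q$ carries at most one edge of $P$, giving $n\le f$.

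It then remains to combine $n\le f$ with the standard bound $f\le 2v-4$, which follows from $v-e+f=2$ together with $2e\ge 3f$ (each facet has at least three edges), yielding $v\ge 2+f/2$. Putting the two inequalities together gives $n\le 2v-4$, that is $v\ge\tfrac{n+4}{2}$, and since $v$ is an integer this is exactly $v\ge\fceil{n+4}{2}$. It is worth noting why we count facets rather than vertices: the dual correspondence, sending each vertex of $P$ to the edge of $Q$ it lies on, only gives $n\le e\le 3v-6$, hence the weaker bound $v\ge\tfrac{n+6}{3}$; so the facets are the objects that yield the sharp estimate.

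The one place that needs care — and the only real obstacle — is the treatment of non-generic planes $H$: when $H$ passes through vertices or edges of $Q$, or when some facet of $Q$ is parallel to the normal direction of $H$, the normal-projection argument must be checked so that the injective edge-to-facet correspondence still holds, the delicate configuration being an edge of $P$ that runs along an edge of $Q$ (and thus sits in two facets at once). I expect this to be dispatched by a short direct case analysis on the position of $H$ relative to the faces of $Q$; in the extreme case where $H$ supports $Q$ along a facet one even has $P$ equal to that facet and the bound holds trivially. None of these special positions affects the main chain $n\le f\le 2v-4$.
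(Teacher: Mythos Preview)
Your proof is correct, and your anxieties about degenerate positions of $H$ are somewhat overstated: the key fact you need is simply that for any facet $F$ with $F\not\subset H$, the set $F\cap H$ is contained in a line, so two distinct edges of the convex polygon $P$ cannot both lie in $F$. This holds regardless of whether $H$ passes through vertices or edges of $Q$, and the case of an edge of $P$ lying along an edge of $Q$ causes no trouble (you simply choose one of the two adjacent facets, and injectivity is unaffected because each facet still contains at most one edge of $P$). The only genuinely special situation is $F\subset H$, which forces $P=F$ and, as you note, gives the bound at once since then $v\ge n+1\ge\fceil{n+4}{2}$.

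Your route is genuinely different from the paper's. The paper works on the \emph{dual} side: it assigns each vertex of $P$ either to a vertex of $Q$ lying on $H$ or to an edge of $Q$ crossing $H$, and then bounds the number of crossing edges by subtracting from the total edge count (at most $3m-6$) the edges incident to vertices on $H$ and the edges lying entirely in one open halfspace. The latter requires an auxiliary connectivity argument for the graphs $G^{+}$ and $G^{-}$ on either side of $H$. Your argument replaces all of this with the single observation that edges of $P$ inject into facets of $Q$, after which $f\le 2v-4$ finishes immediately. This is shorter and more transparent; the paper's approach, on the other hand, yields the slightly sharper intermediate inequality $n\le 2m-4-k$ (with $k$ the number of vertices of $Q$ on $H$), which is what they actually use later when analysing hexagons.
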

\begin{proof}
 Let $Q$ be a $3$-polytope with $m$ vertices such that its intersection with the 
 plane~$H$ coincides with $P$, and let $k$ be the number of vertices of $Q$ that lie on $H$.
 
 By Euler's formula, the number of edges of $Q$ is at most~$3m-6$, of which at least $3k$ have an endpoint on $H$. Moreover, the subgraphs $G^+$ and $G^-$ consisting of edges of $Q$ lying in the open halfspaces $H^+$ and $H^-$ are both connected. Indeed, if $H=\set{x}{\sprod{a}{x}=b}$, then the linear function $\sprod{a}{x}$ induces an acyclic partial orientation on $G^+$ and $G^-$ by setting $v\rightarrow w$ when $\sprod{a}{v}<\sprod{a}{w}$. Following this orientation we can connect each vertex of $G^+$ to the face of $Q$ that maximizes $\sprod{a}{x}$, and following the reverse orientation, each vertex of $G^-$ to the face that minimizes $\sprod{a}{x}$ (compare~\cite[Theorem~3.14]{Ziegler1995}). 
 
 Hence, there are at least $m-k-2$ edges in $G^+\cup G^-$. These are edges of $Q$ that do not intersect $H$. There are also at least $3k$ edges that have an endpoint on~$H$. Now, observe that every vertex of $P$ is either a vertex of $Q$ in $H$ or is the intersection with~$H$ of an edge of $Q$ that has an endpoint at each side of~$H$. Hence,
 \begin{equation}\label{eq:boundnumvertices}n-k\leq (3m-6)-(3k)-(m-k-2)=2m-4-2k,\end{equation}
 and since $k\geq 0$, we get the desired bound.
 \end{proof}

The lower bound of Proposition~\ref{prop:ic3bound} is optimal: for every $m\geq 2$ there are $2m$-gons appearing as sections of $3$-polytopes with $m+2$ vertices (Figure~\ref{fig:optimalcuts}).

\begin{figure}[htpb]
\centering
\includegraphics[width=.22\textwidth]{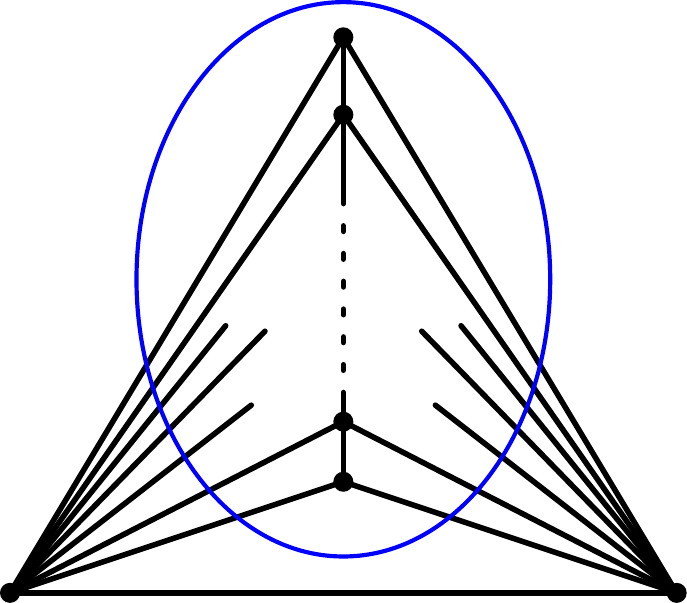}
\caption{For $m\geq 2$, the join of an $m$-path with an edge is the graph of a stacked $3$-polytope with $2m+2$ vertices that has a $2m$-gon as a section (by a plane that truncates the edge).}\label{fig:optimalcuts}
\end{figure}

\begin{corollary}
 The intersection complexity of a hexagon is either $5$ or $6$.
\end{corollary}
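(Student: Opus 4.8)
The plan is to pin down the intersection complexity of a hexagon between its two admissible values by pairing an elementary upper bound with the lower bound supplied by Proposition~\ref{prop:ic3bound}. For the upper bound, $\ic(P)\le 6$ holds for every hexagon $P$: taking $Q=P$ and intersecting with the plane $\aff P$ recovers $P$ as a (trivial) section of a polytope with $6$ vertices. It then remains to show that no hexagon is a section of a polytope with at most $4$ vertices, i.e.\ that $\ic(P)\ge 5$.

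To establish this, I would take a polytope $Q$ of arbitrary dimension having the hexagon $P$ as a section and argue that $Q$ must have at least $5$ vertices, organizing the argument by $\dim Q$ and recalling that a polytope with $v$ vertices has dimension at most $v-1$. If $\dim Q\ge 4$, then $Q$ already has at least $\dim Q+1\ge 5$ vertices and there is nothing to prove. If $\dim Q\le 2$, then since the section $P$ is $2$-dimensional we must have $\dim Q=2$ and $\aff P=\aff Q$, whence $P=Q\cap\aff P=Q$, so that $Q$ is itself a hexagon with $6$ vertices.

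The remaining, and only genuinely substantive, case is $\dim Q=3$. Here I would invoke Proposition~\ref{prop:ic3bound} directly: specialized to $n=6$ it asserts that no hexagon is a section of a $3$-polytope with fewer than $\fceil{n+4}{2}=5$ vertices. Thus in every case $Q$ has at least $5$ vertices, giving $\ic(P)\ge 5$ and, together with the upper bound, $\ic(P)\in\{5,6\}$.

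I expect the main (and essentially the only) subtlety to be dimensional bookkeeping rather than any hard geometry: Proposition~\ref{prop:ic3bound} is stated only for sections of $3$-polytopes, whereas $\ic$ ranges over polytopes of all dimensions, so one must separately dispatch the degenerate low-dimensional cases and the high-dimensional cases before the proposition can be applied. Once the case $\dim Q=3$ has been isolated, the proposition does all the work.
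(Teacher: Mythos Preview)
Your proof is correct and follows the approach the paper intends: the corollary is stated there without proof, as an immediate consequence of Proposition~\ref{prop:ic3bound} together with the trivial upper bound $\ic(P)\le 6$. Your additional dimensional bookkeeping (dispatching $\dim Q\le 2$ and $\dim Q\ge 4$ before invoking the proposition) is a welcome bit of care that the paper leaves implicit, but it does not constitute a different route.
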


\begin{proposition}\label{prop:ichexagon}
 The intersection complexity of a hexagon is $5$ if and only if the lines
$\li{p_0}{p_5}$, $\li{p_1}{p_4}$ and $\li{p_2}{p_3}$ intersect in a common point
of the projective plane for some cyclic labeling of its vertices $\set{p_i}{i\in\ZZ/6\ZZ}$.
\end{proposition}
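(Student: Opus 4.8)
The plan is to recognize the concurrency condition as the exact projective constraint that allows $P$ to be reconstructed as a planar section of a triangular bipyramid. By the Corollary it suffices to decide whether $\ic(P)\le 5$, that is, whether $P=Q\cap H$ for a polytope $Q$ with $\card{\verts Q}=5$. First I would pin down the shape of any such witness $Q$. A $5$-vertex polytope has dimension at most $4$; it cannot be $2$-dimensional (the sections of a pentagon are segments) and it cannot be a $4$-simplex (a $2$-plane meets its five facets in at most five edges, so the section is at most a pentagon), so $Q$ is a $3$-polytope. Among $3$-polytopes with five vertices there are only the triangular bipyramid ($9$ edges) and the quadrilateral pyramid ($8$ edges). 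Specializing the counting in the proof of Proposition~\ref{prop:ic3bound} to $n=6$, $m=5$ forces $k=0$ and shows that $Q$ must have at least nine edges, so $Q$ is a triangular bipyramid with apexes $u,v$ and equatorial triangle $w_1w_2w_3$. Finally, inspecting the $2$-colourings of its edge graph, I expect the only one giving six crossing edges with linearly separable, connected colour classes to be the one separating $\{w_1,w_2\}$ from $\{u,v,w_3\}$.

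With this combinatorial type in hand the forward direction becomes transparent. The six vertices of $P$ are the points where $H$ meets the edges $w_iu$, $w_iv$, $w_iw_3$ for $i=1,2$, so each of $u$, $v$, $w_3$ contributes two vertices of $P$, namely its central projections onto $H$ from the two centres $w_1$ and $w_2$. Two central projections of a common point from $w_1,w_2$ are collinear with $Z=\li{w_1}{w_2}\cap H$; hence the three lines joining the two images of $u$, of $v$, and of $w_3$ all pass through $Z$. Transcribing this into a cyclic labelling of $P$ yields exactly $\li{p_0}{p_5}$, $\li{p_1}{p_4}$ and $\li{p_2}{p_3}$ meeting at $Z$.

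The converse is the heart of the matter and I expect it to be the main obstacle. Given a cyclic labelling for which $\li{p_0}{p_5}$, $\li{p_1}{p_4}$ and $\li{p_2}{p_3}$ share a common point $Z$, I would reconstruct $Q$ as in a two-view stereo problem. Place $P$ in the plane $H=\{z=0\}\subset\RR^3$, choose two centres $w_1,w_2$ off $H$ on a line through $Z$, and define $u=\li{w_1}{p_5}\cap\li{w_2}{p_0}$, $v=\li{w_1}{p_3}\cap\li{w_2}{p_2}$ and $w_3=\li{w_1}{p_4}\cap\li{w_2}{p_1}$. Since each of the three hexagon-lines passes through the very point $Z$ that also lies on $\li{w_1}{w_2}$, each corresponding pair of back-projected rays is coplanar and therefore actually meets in space; it is precisely the coincidence of the three perspectivity centres into one point $Z$ --- equivalently, by Desargues, a perspectivity of the two triangles from a line --- that renders the reconstruction consistent. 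Then $Q=\conv\{u,v,w_1,w_2,w_3\}$ is the candidate bipyramid.

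What remains is the convexity bookkeeping that I expect to absorb most of the effort: choosing the heights of $w_1,w_2$ so that $u,v,w_3$ land on the opposite side of $H$, verifying that the five points are in convex position as a triangular bipyramid cut by $H$ in the required $\{w_1,w_2\}\mid\{u,v,w_3\}$ pattern, and confirming that the section is exactly $P$ rather than a proper subpolygon or superpolygon. This establishes $\ic(P)\le 5$, and together with the Corollary gives $\ic(P)=5$.
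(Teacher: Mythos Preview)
Your proposal follows essentially the same route as the paper: narrow $Q$ down to a $3$-polytope via the dimension argument, use the edge count from Proposition~\ref{prop:ic3bound} with $n=6$, $m=5$ to force $k=0$ and at least nine edges (hence the bipyramid rather than the pyramid), identify the unique admissible cut, and read off the concurrency of the three diagonals at the point $\li{w_1}{w_2}\cap H$. The only substantive difference is in the converse. The paper first applies an affine normalization sending the concurrency point to the origin, then writes down explicit coordinates for the five vertices of $Q$ depending on a free parameter $K$; this makes the convexity and section checks a direct computation. Your two-view stereo reconstruction is precisely the projective picture underlying those formulas and is equally valid, though---as you correctly flag---the convexity bookkeeping is less immediate in that language, and you would still need to handle the case where $Z$ is at infinity (the paper dismisses it as analogous). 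One small point worth tightening: your edge count gives nine crossing-plus-noncrossing edges, but to single out the cut $\{w_1,w_2\}\mid\{u,v,w_3\}$ you should also invoke the connectedness of $G^\pm$ from Proposition~\ref{prop:ic3bound}, since the two-apex cut of the bipyramid also has six crossing edges but leaves one side disconnected.
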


\begin{figure}[htpb]
\centering
\subcaptionbox{Non-hexagonal cuts.}[
.58\textwidth ]
{\includegraphics[width=.55\textwidth]{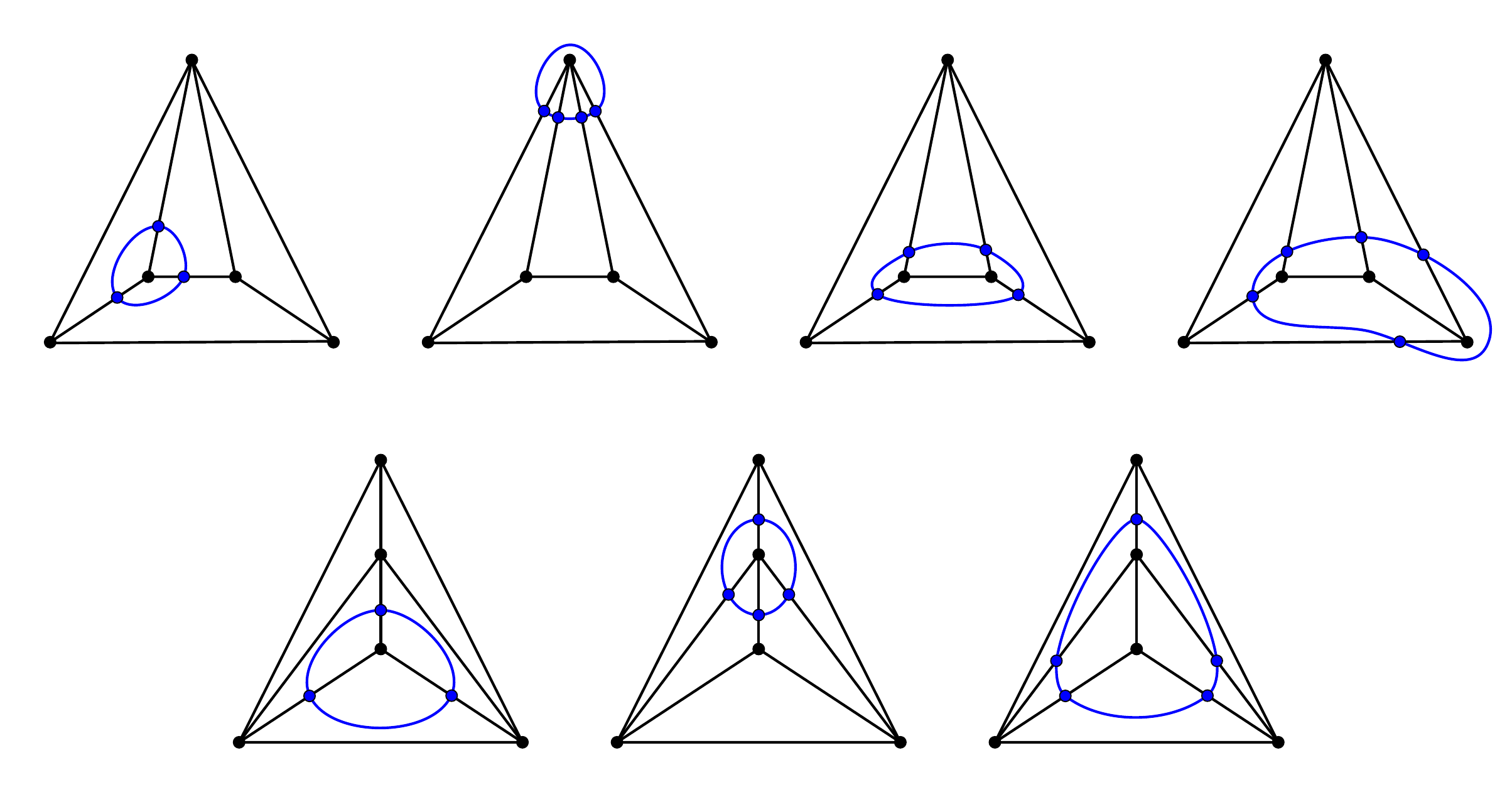}}
 \subcaptionbox{The only hexagonal cut.\label{fig:bipyrcut}}[.4\textwidth ]
{\includegraphics[width=.3\textwidth]{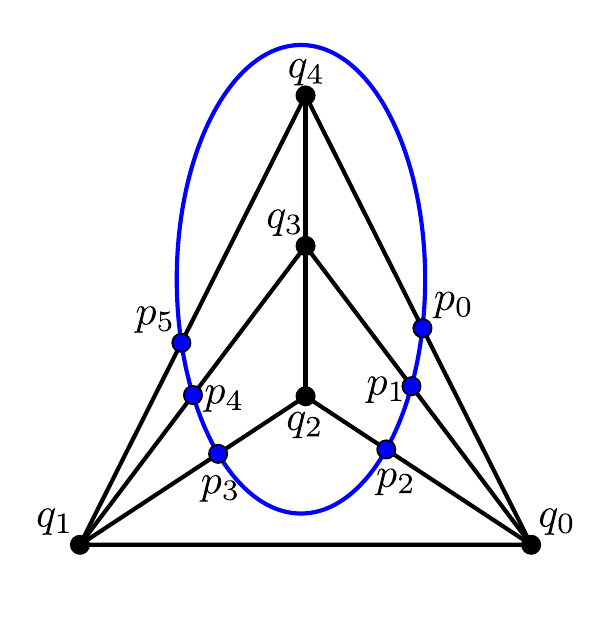}}
\caption{All cuts of the quadrangular pyramid and the triangular bipyramid into two connected components, up to symmetry.}\label{fig:cuts}
\end{figure}

\begin{proof}
 The only $4$-polytope with $5$ vertices is the simplex, which only has $5$ facets;
thus, none of its $2$-dimensional sections is a hexagon. Therefore, if $P$ is a hexagon, then 
 $\ic(P)=5$ if and only if it is the intersection of a $2$-plane~$H$ with a $3$-polytope~$Q$ with $5$~vertices.

 There are only two combinatorial types of $3$-polytopes with $5$~vertices: the quadrangular pyramid and the triangular bipyramid. By \eqref{eq:boundnumvertices}, $H$ does not contain any vertex of~$Q$. Hence, $H$ induces a cut of the graph of $Q$ into two (nonempty) disjoint connected components.
 A small case-by-case analysis (cf. Figure~\ref{fig:cuts}) tells us that the only 
 possibility is that $Q$ is the bipyramid and $H$ cuts its graph as shown in Figure~\ref{fig:bipyrcut}. However, in every geometric realization of such a cut (with the same labeling), the lines $\li{p_0}{p_5}$, $\li{p_1}{p_4}$ and $\li{p_2}{p_3}$ intersect in a common (projective) point: the point of intersection of $\li{q_0}{q_1}$
 with $H$ (compare Figure~\ref{fig:hexagonsection}).

\begin{figure}[htpb]
\centering{\includegraphics[width=.9\textwidth]{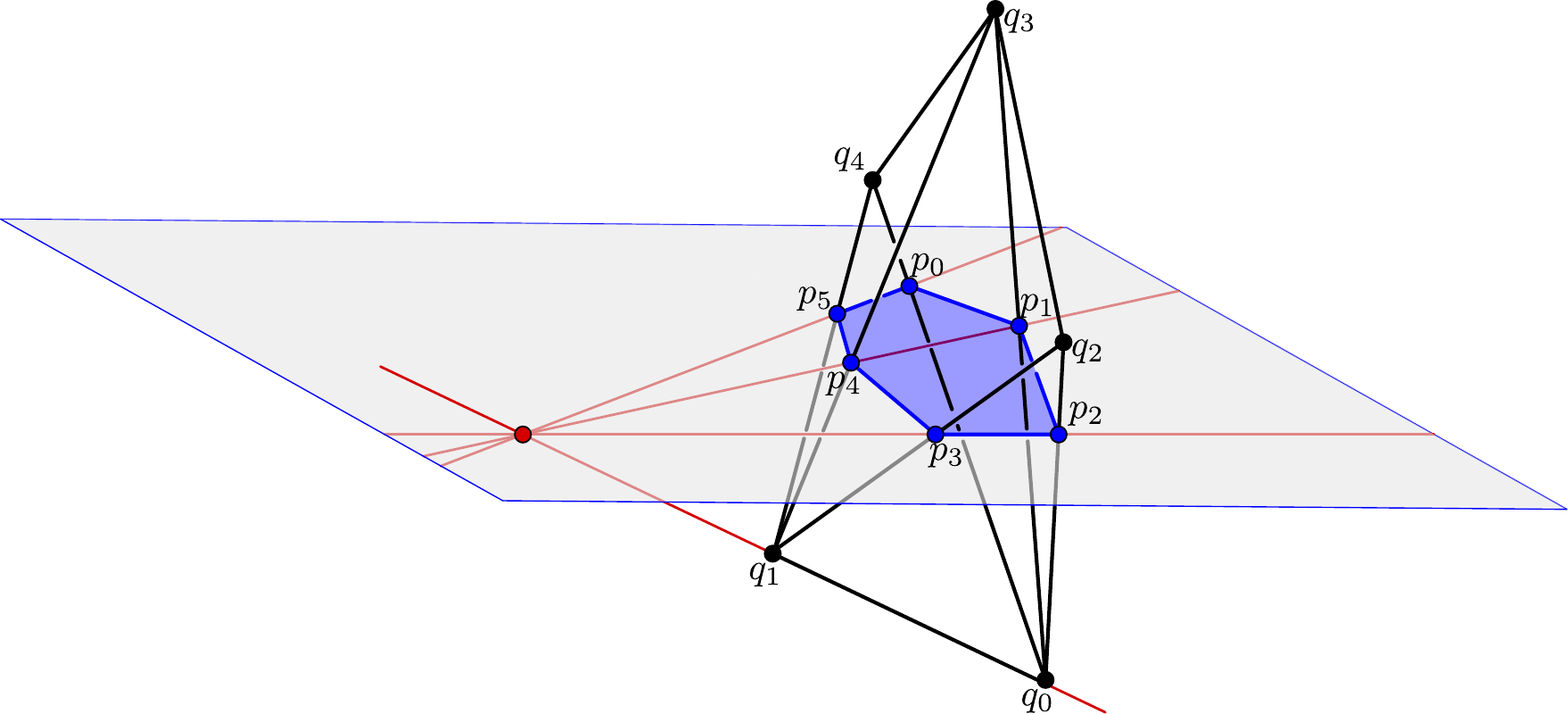}}

\caption{A hexagon as a section of a triangular bipyramid.}\label{fig:hexagonsection}
\end{figure}

 For the converse, we prove only the case when the point of intersection is finite (the 
 case with parallel lines is analogous). Then we can apply an affine transformation
 and assume that the coordinates of the hexagon are 
  \begin{align*}
  p_0&=(0,\alpha),& p_1&=(\beta x,\beta y),& p_2&=(\gamma,0)\\
  p_3&=(1,0),&p_{4}&=(x,y),& p_{5}&=(0,1);
  \end{align*}
 for some $x,y>0$ and $\alpha,\beta,\gamma>1$.

Now, let $K>\max(\alpha,\beta,\gamma)$ and consider the polytope $Q$ with vertices 
\begin{align*}
  q_0&=(0,0,-K),\qquad  q_1=(0,0,-1),\qquad  q_2=\frac{\big((K-1)\gamma,0,  K(\gamma-1)\big)}{K-\gamma},\\
  q_3&=\frac{\big(x(K-1)\beta,y(K-1)\beta, K(\beta-1)\big)}{K-\beta},
  \quad  q_4=\frac{\big(0,(K-1)\alpha,  K(\alpha-1)\big)}{K-\alpha}.
  \end{align*}
  If $H$ denotes the plane of vanishing third coordinate, then $q_0$ and $q_1$ lie below~$H$, while $q_2$, $q_3$ and $q_4$ lie above. The intersections $\li{q_i}{q_j}\cap H$ for $i\in\{0,1\}$ and $j\in\{2,3,4\}$ coincide with the vertices of $P\times\{0\}$. This proves that $P\times\{0\}=Q\cap H$, and hence that $\ic(P)=5$.
\end{proof}

\begin{remark}
Let $P$ be a regular hexagon, and let $Q$ be a polytope with $5$ vertices such that $Q\cap H=P$ for some plane~$H$. 
By the proof of Proposition~\ref{prop:ichexagon}, $Q$ is a triangular bipyramid and one of the two halfspaces
defined by $H$ contains only two vertices of $Q$: $q_0$ and $q_1$. Even more, since $P$ is regular,
the line $\li{q_0}{q_1}$ must be parallel to one of the edge directions of $P$ because, as we saw in the previous proof, the projective point $\li{q_0}{q_1}\cap H$ must coincide with the intersection of two opposite edges of $P$ (at infinity in this case).
This means that there are three different choices for the direction of the line $\li{q_0}{q_1}$; and shows that
the set of minimal extensions of~$P$ (which can be parametrized  by the vertex coordinates) is not connected, even if we consider 
its quotient space obtained after identifying extensions related by an admissible projective
transformation that fixes $P$ and those related by a relabeling of the vertices of $Q$. 
A similar behavior was already observed in~\cite{MondSmith2003} for the space
of nonnegative factorizations of nonnegative matrices of rank~$3$ and nonnegative rank~$3$.
\end{remark}

\begin{figure}[htpb]
\centering
{\includegraphics[width=.4\textwidth]{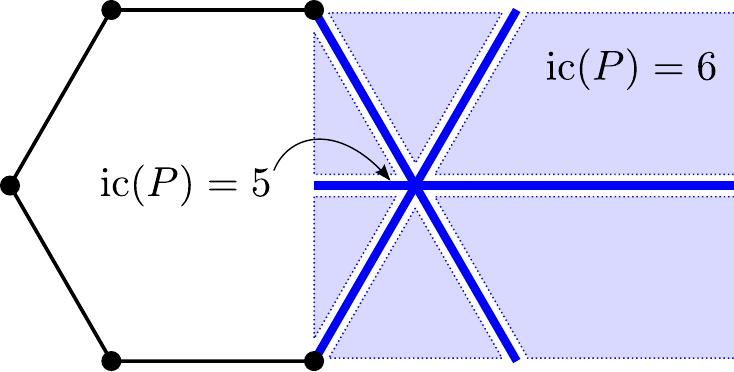}}
\caption{The intersection complexity of $P$ according to the position of the last vertex.}\label{fig:hexagonRS}
\end{figure}
\begin{remark}
Consider the set of all hexagons with $5$ fixed vertices. The position of the last vertex determines its intersection/extension complexity. This is depicted in Figure~\ref{fig:hexagonRS}. The hexagon fulfills the condition of Proposition~\ref{prop:ichexagon} if and only if the last point lies on any of the three dark lines. Hence, $\ic(P)=5$ if the last point lies on a dark line and $\ic(P)=6$ otherwise.
Actually, an analogous picture appears for any choice for the position of the 
initial $5$ points (the dark lines are always concurrent because of Pappus's Theorem).
In addition, the dark lines depend continuously on the coordinates of the first $5$ points.
This implies that, if we take two realizations that have the last point in two different 
$\ic(P)=6$ regions in Figure~\ref{fig:hexagonRS}, then we cannot continuously transform one into the other. Said otherwise, the realization space of the hexagon (as considered by Richter-Gebert in~\cite{RG}) restricted to those that have intersection complexity~$6$ is disconnected.
\end{remark}

\section{The complexity of heptagons}

In this section we prove our main result, Theorem~\ref{thm:icheptagon}, in two steps.
The easier part consists of showing that a special family of heptagons,
which we call standard heptagons, always have intersection complexity less than or equal to~$6$ (Proposition~\ref{prop:icstandardheptagon}).
The remainder of the section is devoted to proving the second step, Proposition~\ref{prop:noncrossingstandardization}: every heptagon is projectively equivalent to a standard heptagon.

\subsection{A standard heptagon}

Here, and throughout this section, $P$ denotes a heptagon and $\set{p_i}{i\in \ZZ/7\ZZ}$ is its set of vertices, cyclically clockwise labeled.

\begin{definition}
We say that $P$ is a \defn{standard heptagon} if
$p_0=(0,0)$, $p_3=(0,1)$ and $p_{-3}=(1,0)$; and
the lines $\li{p_1}{p_2}$ and $\li{p_{-1}}{ p_{-2}}$ are respectively parallel to the
lines $\li{p_0}{p_3}$ and $\li{p_0}{p_{-3}}$
(see~Figure~\ref{fig:heptagon}).
\end{definition}

We can easily prove that standard heptagons have intersection complexity at
most~$6$.

\begin{figure}[htbp]
\centering
\subcaptionbox{Coordinates of a standard
heptagon.\label{fig:heptagon}}[.45\textwidth]
{\raisebox{1cm}{\includegraphics[width=.4\textwidth]{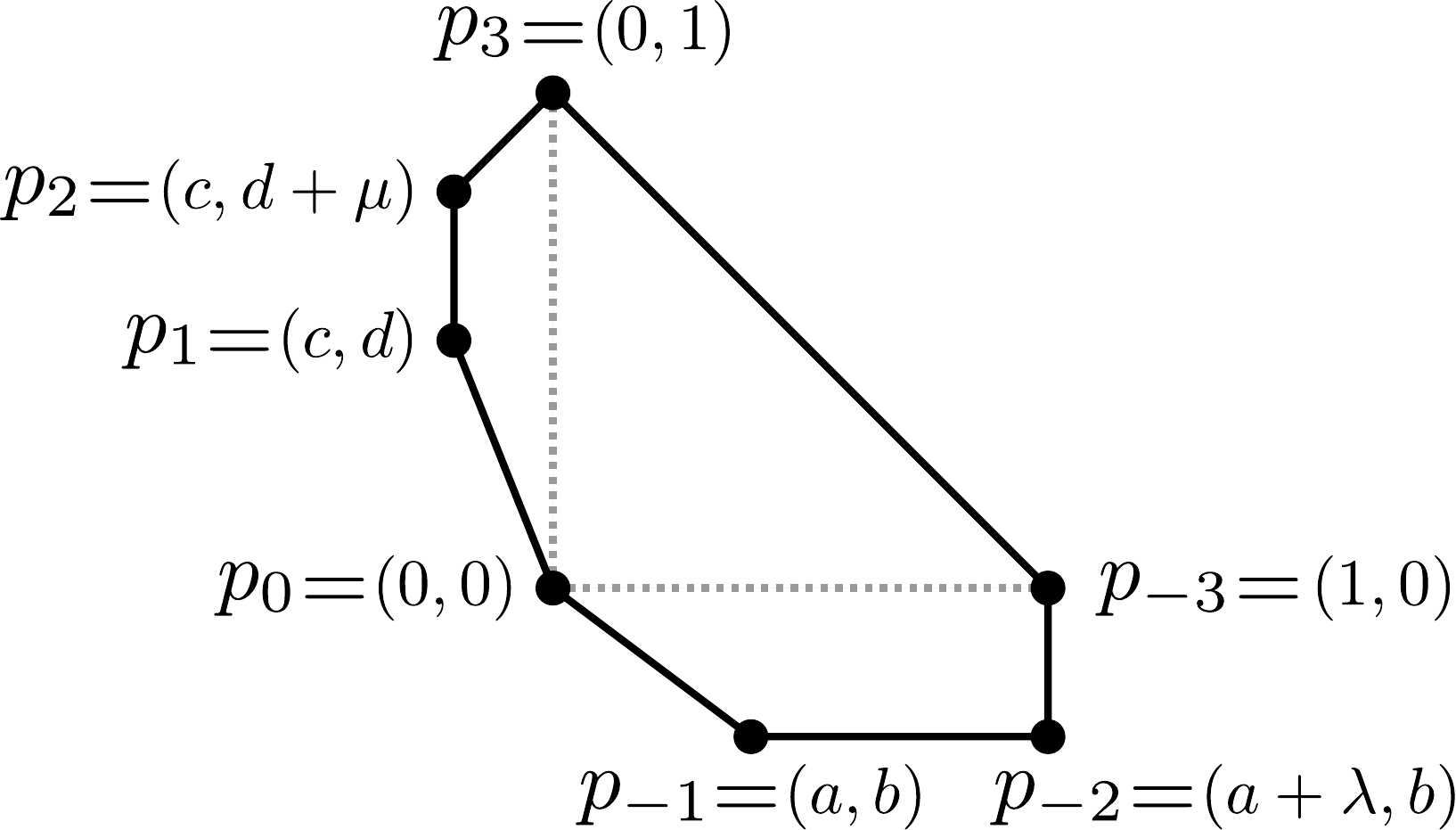}}}
\qquad
\subcaptionbox{The setup of
Proposition~\ref{prop:icstandardheptagon}.\label{fig:intersection}}[
.45\textwidth ]
{\includegraphics[width=.4\textwidth]{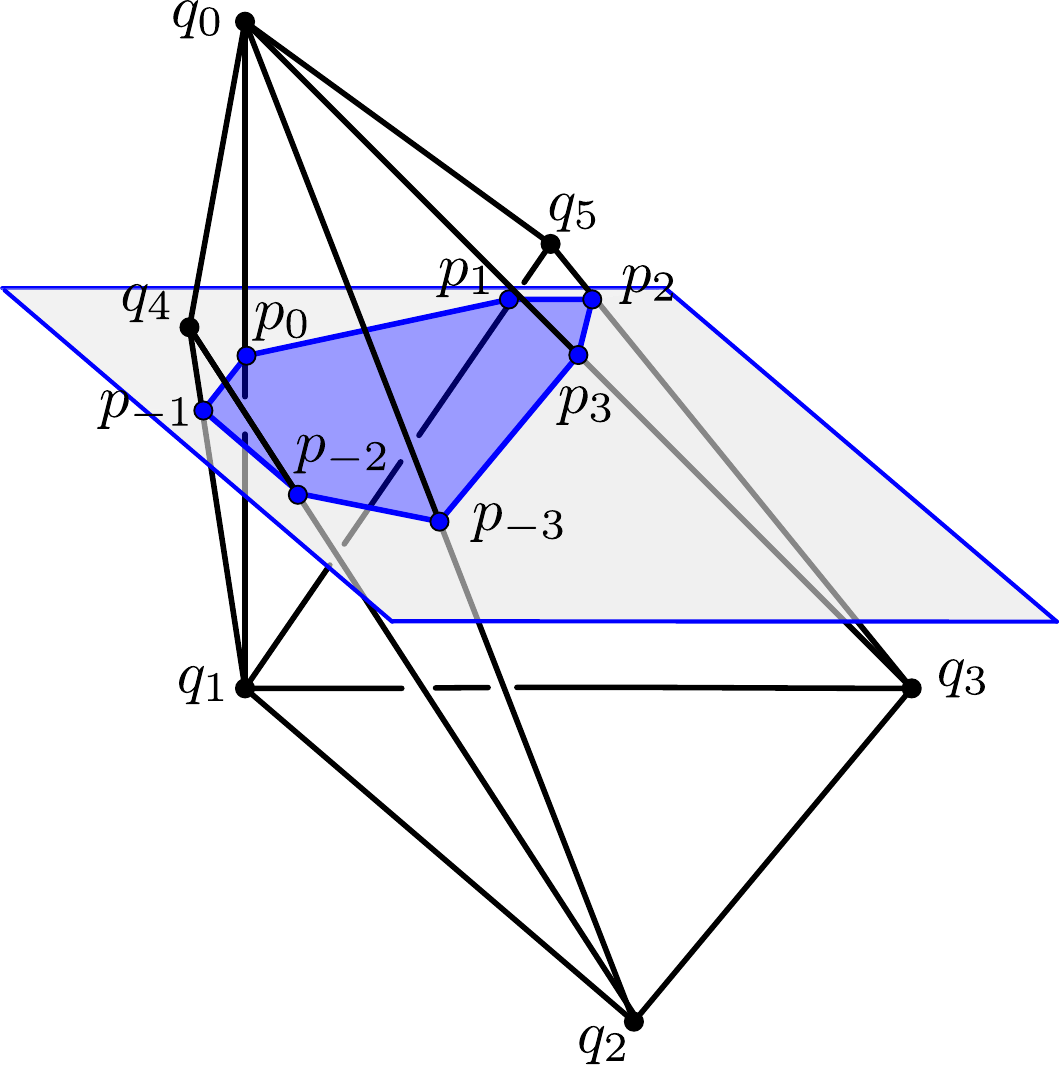}}
        \caption{Standard heptagons.}
\end{figure}

\begin{proposition}\label{prop:icstandardheptagon}
Every standard heptagon $P$ is a section of a $3$-polytope $Q$ with $6$ vertices.
In particular $\ic(P)\leq 6$.
\end{proposition}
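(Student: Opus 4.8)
The plan is to imitate the explicit construction used for hexagons in Proposition~\ref{prop:ichexagon}. Place the heptagon in the plane $H=\{x_3=0\}$ and identify $P$ with $P\times\{0\}$; then exhibit six points $q_0,\dots,q_5\in\RR^3$, written as explicit rational functions of the coordinates of the standard heptagon, so that $Q=\conv\{q_0,\dots,q_5\}$ satisfies $Q\cap H=P\times\{0\}$. As in the hexagon case, each vertex $p_i$ will be recovered as the point where an edge of $Q$ with one endpoint below and one endpoint above $H$ meets $H$, and each edge $p_ip_{i+1}$ of $P$ as the intersection of a facet of $Q$ with $H$. Because $P$ has seven vertices and seven edges, exactly seven edges and seven facets of $Q$ must cross $H$.

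The combinatorics of $Q$ are essentially forced. Each of the seven edges of $P$ lies on its own facet of $Q$ (a crossing facet meets $H$ in a single segment), so $Q$ has at least seven facets; with $V=6$, Euler's formula gives $E=F+4$ and $2E\ge 3F$, whence $F\le 8$, so $F\in\{7,8\}$. The octahedron is ruled out, since every plane section of an octahedron has at most six edges. The natural target is therefore the simplicial $3$-polytope on six vertices that is not the octahedron: it has eight triangular facets, of which exactly one will avoid $H$. Concretely I would put three of the $q_i$ below $H$, spanning the unique facet that misses $H$, and three above, placed as in Figure~\ref{fig:intersection} so that the seven edges joining the two sides cross $H$ at $p_0,\dots,p_6$ in cyclic order.

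What the standardization provides is that these six points can be solved for in closed form. After the normalization $p_0=(0,0)$, $p_3=(0,1)$, $p_{-3}=(1,0)$, the two parallelism conditions $\li{p_1}{p_2}\parallel\li{p_0}{p_3}$ and $\li{p_{-1}}{p_{-2}}\parallel\li{p_0}{p_{-3}}$ send the auxiliary meets governing the lift to the points at infinity in the two coordinate directions; this is the same degeneracy (a meet at infinity) that made the hexagon coordinates explicit, and it collapses the crossing-point equations to simple rational expressions in the remaining free parameters of $P$. I would then carry out four checks: (i) write down the six coordinate vectors; (ii) verify that they are in convex position, so that $Q$ really has six vertices; (iii) verify that the seven designated edges meet $H$ exactly at $p_0,\dots,p_6$ while the eighth facet stays strictly on one side; and (iv) conclude that the seven crossing facets cut $H$ in precisely the seven edges of $P$, so $Q\cap H=P\times\{0\}$ and hence $\ic(P)\le 6$. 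The main obstacle is reconciling (ii) and (iii): the lift must be chosen so that all seven crossings land on the correct vertices \emph{and} the six points are convex with exactly the intended face lattice; following the hexagon proof, I expect to establish this in generic position and remark that the limiting configurations are analogous.
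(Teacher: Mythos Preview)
Your overall plan---embed $P$ in $H=\{x_3=0\}$, write down six explicit points with three above and three below, and recover the $p_i$ as crossings---is exactly what the paper does. But your verification strategy is both more laborious than necessary and contains a miscount that would trip you up.

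With three points above $H$ and three below, there are $3\times 3=9$ segments joining the two sides, not seven. The paper does not try to decide which of these are genuine edges of $Q$, nor does it determine the face lattice of $Q$ or even check that the six points are in convex position. Instead it simply computes all nine intersections $\ell_{q_i,q_j}\cap H$: seven of them are the vertices $p_0,\dots,p_6$, and the remaining two, namely $(a,b+\lambda)$ and $(c+\mu,d)$, are then shown to lie \emph{inside} $P$ by writing each as an explicit convex combination of three vertices of $P$ (the coefficients are forced positive by the convexity inequalities $a>0$, $-b>0$, $\lambda>0$, $1-a-b-\lambda>0$). Since every vertex of $Q\cap H$ must be one of these nine points, and all nine lie in $P$, one gets $Q\cap H\subseteq P$; the reverse inclusion is immediate because the seven $p_i$ are among the crossings.

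So steps (ii) and (iv) of your plan, and the whole combinatorial discussion of which $6$-vertex $3$-polytope $Q$ must be, are unnecessary. More importantly, your closing hedge about ``generic position'' and ``limiting configurations'' is a warning sign: once you adopt the paper's check-all-nine-crossings argument, the proof is a uniform computation valid for every standard heptagon, with no genericity assumption and nothing left to a limit.
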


\begin{proof}
For any standard heptagon~$P$, there are real numbers $b,c < 0 < a,d,\lambda,\mu$ 
such that the coordinates of the vertices of $P$ are
\begin{align*}
p_0&=(0,0),& p_1&=(c,d),& p_2&=(c,d+\mu),&p_3&=(0,1),\\
&&p_{-1}&=(a,b),& p_{-2}&=(a+\lambda,b), & p_{-3}&=(1,0). 
\end{align*}

\noindent Fix some $K>\max(\lambda -1,\mu -1)$ and consider the points 
\begin{align*}
q_0&:=(0,0,1),\,\, q_1:=(0,0,-K),\,\,q_2:=(1+K,0,-K),\,\,q_3:=(0,1+K,-K),\\ 
q_4&:=\frac{\big(a(1+K),b(1+K),
\lambda K\big)}{(1+K)-\lambda},\,\,
q_5:=\frac{\big((1+K)c,(1+K)d,\mu
K\big)}{(1+K)-\mu}.
\end{align*}

\noindent We claim that $P$ is the intersection of the $3$-polytope 
$Q:=\conv\{q_0,q_1,\dots,q_5\}$ with the plane $H:=\set{(x,y,z)\in\RR^3}{z=0}$:
\[
   Q\cap H=P\times \{0\}.
\]

Observe that every vertex of $Q\cap H$ corresponds to the intersection of~$H$ with
an edge of~$Q$ that has one endpoint on each side of the plane. Since 
$q_0$, $q_4$ and $q_5$ lie above $H$ and $q_1$, $q_2$ and $q_3$
lie below, the intersections of the relevant lines $\li{ q_i}{q_j}$ with~$H$
are (see Figure~\ref{fig:intersection}):
\begin{align*}
\li{q_0}{q_1}\cap H&=(0,0,0),& \li{q_0}{q_2}\cap H&=(1,0,0),& \li{q_0}{q_3}\cap H&=(0,1,0),\\
\li{q_4}{q_1}\cap H&=(a,b,0),&\li{q_4}{q_2}\cap H&=(a+\lambda,b,0),&\li{q_4}{q_3}\cap H&=(a,b+\lambda,0),\\
\li{q_5}{q_1}\cap H&=(c,d,0),&\li{q_5}{q_2}\cap H&=(c+\mu,d,0),&\li{q_5}{q_3}\cap H&=(c,d+\mu,0) .
\end{align*}

These are the vertices of $P\times\{0\}$ together with $(a,b+\lambda,0)$,
$(c+\mu,d,0)$, which proves that $Q\cap H\supseteq P\times \{0\}$.
To prove that indeed $Q\cap H= P\times \{0\}$, we need to see that 
both $(a,b+\lambda)$ and $(c+\mu,d)$ belong to $P$.

The convexity of $P$ implies the following conditions on the coordinates of its vertices by comparing, respectively, $p_{-1}$ with the lines $\li{p_0}{p_3}$ and $\li{p_0}{p_{-3}}$, $p_{-1}$ with $p_{-2}$, and $p_{-2}$ with the line $\li{p_3}{p_{-3}}$:
\begin{align*}
 a&>0, & -b&>0, & \lambda&>0, & 1-a-b-\lambda>0.
\end{align*}

Hence, the real numbers $\frac{1-a-b-\lambda}{1-b}$, $\frac{a}{1-b}$ and $\frac{\lambda}{1-b}$ are all greater than $0$. Since they add up to $1$, we can exhibit $(a,b+\lambda)$ as a convex combination
of $p_{-1}$, $p_{-2}$ and $p_3$:
\begin{equation*}
 \frac{1-a-b-\lambda}{1-b}\,p_{-1}+\frac{a}{1-b}\,p_{-2}+\frac{\lambda}{1-b}\,p_3=(a,b+\lambda).
\end{equation*}
This proves that $(a,b+\lambda)\in P$. That $(c+\mu,d)\in P$ is proved analogously.
\end{proof}

\subsection{Standardization lines of heptagons}

Our next goal is to show that every heptagon is projectively equivalent to a standard heptagon. For this, the key concept is that of a standardization line.

\begin{definition}
Consider a heptagon $P$, embedded in the projective space~$\PP^2$, whose vertices are cyclically labeled $\set{p_i}{i\in \ZZ/7\ZZ}$.
For $i\in \ZZ/7\ZZ$, and abbreviating $\li{i}{j}:=\li{p_i}{p_j}$, construct
\begin{align*}
p_i^+ &:= \li{i+ 1}{i+ 2}\vee\li{i}{i+ 3},&
p_i^- &:= \li{i- 1}{i- 2}\vee\li{i}{i- 3},&
\ell_i &:= p_i^+ \wedge p_i^-.
\end{align*}

We call the line $\ell_i$ the $i$th \defn{standardization line} of~$P$. If $\ell_i\cap P=\emptyset$, it is a \defn{non-crossing} standardization line.
\end{definition}
Figure~\ref{fig:standardizationlines} shows a heptagon and its 
standardization lines $\ell_0$ and $\ell_ {-3}$. Observe that $\ell_0$~is a non-crossing  standardization line, while $\ell_{-3}$ is not. 
\begin{figure}[htpb]
\includegraphics[width=.45\linewidth]{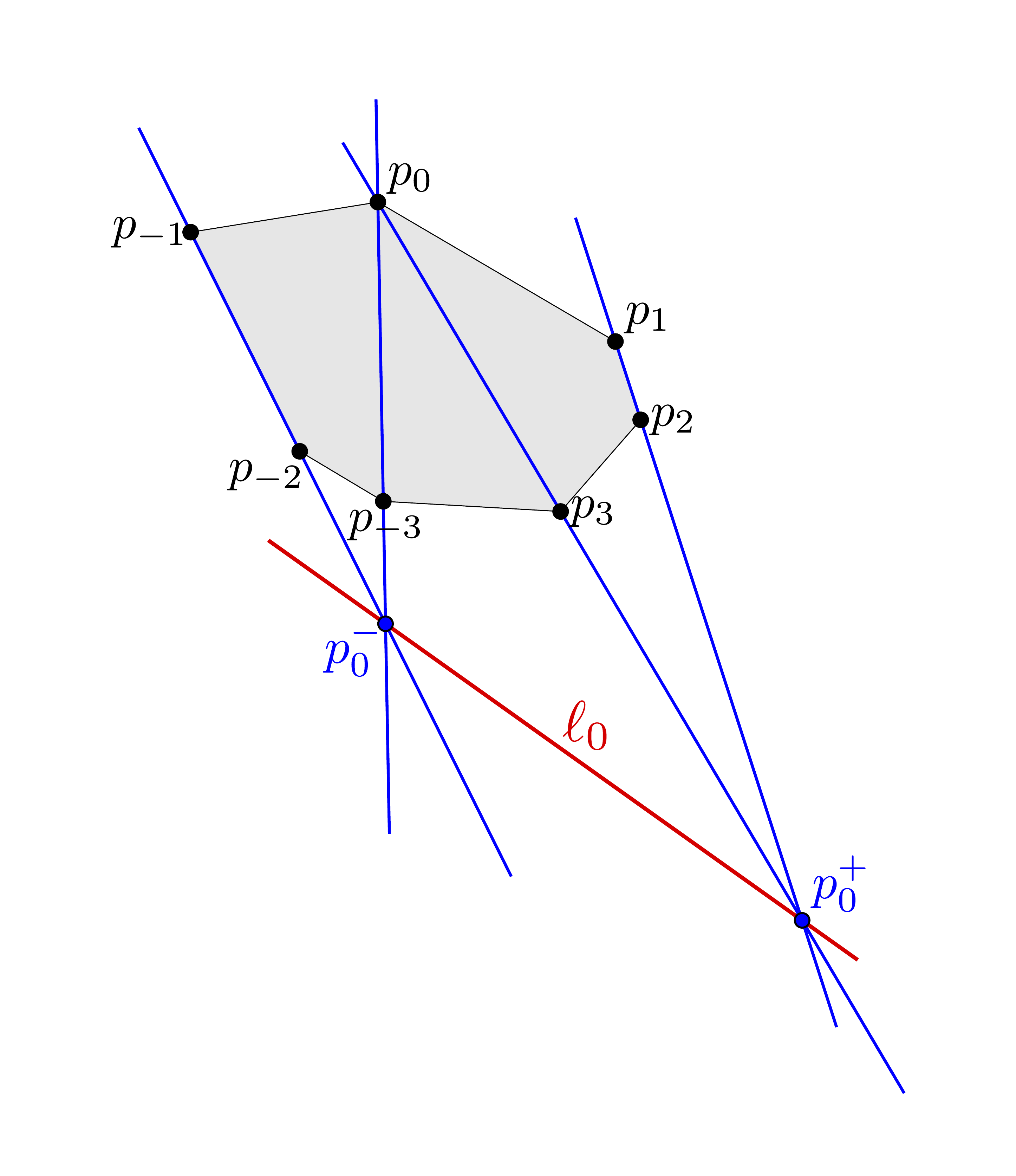}
\includegraphics[width=.45\linewidth]{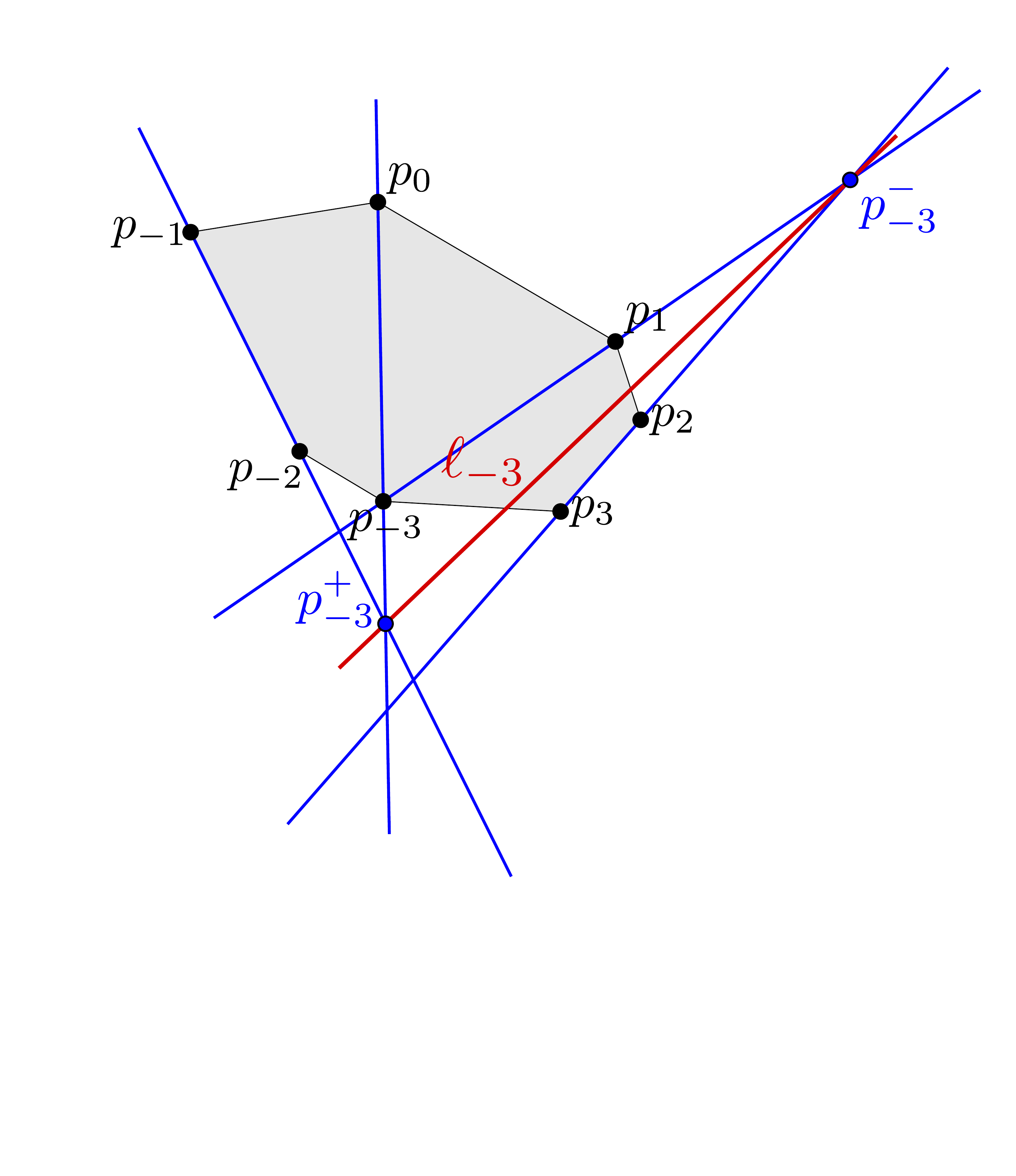}
\vspace{-.8cm}
\caption{The standardization lines $\ell_0$ and
$\ell_ {-3}$.}\label{fig:standardizationlines}
\end{figure}

\begin{lemma}\label{lem:converttostandard}
 A heptagon $P$ is projectively equivalent to a standard heptagon if  and only if it has at least one non-crossing standardization line.
\end{lemma}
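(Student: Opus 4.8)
The plan is to establish both directions of the equivalence, where the forward direction is essentially a reformulation of the definition of a standard heptagon and the backward direction requires exhibiting an explicit projective transformation. I would begin by analyzing what the standardization line $\ell_i$ encodes geometrically. The point $p_i^+ = \li{i+1}{i+2}\vee\li{i}{i+3}$ is the intersection of the line through the ``next'' pair of vertices $p_{i+1}, p_{i+2}$ with the line $\li{p_i}{p_{i+3}}$, and $p_i^-$ is its mirror on the other side. The defining property of a standard heptagon, when we take $i=0$, is precisely that $\li{p_1}{p_2}$ is parallel to $\li{p_0}{p_3}$ and $\li{p_{-1}}{p_{-2}}$ is parallel to $\li{p_0}{p_{-3}}$; in projective terms, parallelism means these respective lines meet on the line at infinity. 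So the crux is to observe that \emph{$P$ is standard (in the normalized position) exactly when the line at infinity plays the role of $\ell_0$}: the points $p_0^+$ and $p_0^-$ both lie at infinity, and the line joining them, $\ell_0$, is the line at infinity itself, which is non-crossing because $P$ is a bounded (affine) heptagon.

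For the forward direction, suppose $P$ is projectively equivalent to a standard heptagon $P'$ via some projective transformation $\tau$. Standardization lines are defined purely through incidences (joins and meets of lines through vertices), and projective transformations preserve all incidences, so $\tau$ maps the standardization lines of $P$ to those of $P'$ with the same indices. Since $P'$ is standard, its line at infinity is the standardization line $\ell_0'$ and is non-crossing (disjoint from the bounded polygon $P'$); pulling back through $\tau^{-1}$, the corresponding standardization line of $P$ is non-crossing as well. Here I must be a little careful: non-crossing is the condition $\ell_i\cap P=\emptyset$, and while projective transformations preserve incidence they can move the line at infinity, so I would phrase the argument as: disjointness of a line from a polygon is preserved under any projective transformation whose domain of definition does not split $P$ (equivalently, $\tau$ does not send any point of $P$ to infinity), which holds here because $\tau^{-1}$ carries the bounded $P'$ to the bounded $P$.

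For the more substantial backward direction, I assume $P$ has a non-crossing standardization line, say $\ell_0$ after relabeling. The goal is to produce a projective transformation carrying $P$ to a standard heptagon. I would choose the transformation to send $\ell_0$ to the line at infinity; this immediately makes the two relevant pairs of lines parallel, since $p_0^+$ and $p_0^-$ both lie on $\ell_0$ and hence get sent to infinity, forcing $\li{p_1}{p_2}\parallel\li{p_0}{p_3}$ and $\li{p_{-1}}{p_{-2}}\parallel\li{p_0}{p_{-3}}$ in the image. Because $\ell_0$ is non-crossing, this projective transformation keeps $P$ bounded (it does not cut the polygon), so the image is a genuine affine heptagon. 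Finally I would compose with an affine transformation to pin down the three coordinate conditions $p_0=(0,0)$, $p_3=(0,1)$, $p_{-3}=(1,0)$, which is always possible since these are three affinely independent points (an affine map is determined by and can realize any prescribed images of three independent points), and affine maps preserve the two parallelism conditions already achieved.

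\textbf{Main obstacle.} I expect the delicate point to be the boundedness/non-crossing bookkeeping in the backward direction: one must verify that sending the non-crossing line $\ell_0$ to infinity really yields a convex bounded heptagon with the vertices in the correct cyclic order, rather than an unbounded or self-intersecting configuration. The hypothesis $\ell_0\cap P=\emptyset$ is exactly what guarantees that $P$ lies strictly on one side of $\ell_0$, so after the transformation it sits in an affine chart as a bounded convex polygon; making this rigorous—and checking that the resulting coordinates satisfy the sign conditions implicit in the definition of standard—is where the real content lies, whereas the incidence-preservation and affine-normalization steps are routine.
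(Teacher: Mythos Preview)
Your proposal is correct and follows essentially the same approach as the paper: for the forward direction, observe that in a standard heptagon the line at infinity coincides with $\ell_0$ (since $p_0^\pm$ are points at infinity by the parallelism conditions), hence is non-crossing, and pull this back via the projective equivalence; for the backward direction, send the non-crossing $\ell_i$ to infinity and then affine-normalize. Your version is more explicit than the paper's two-sentence proof, and the care you take over boundedness and preservation of non-crossing under the projective map is warranted but ultimately routine, exactly as you anticipate.
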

\begin{proof}
The line at infinity of a standard heptagon must be one of its standardization
lines, which is obviously non-crossing. Conversely, the projective
transformation that sends a non-crossing standardization line of $P$ to
infinity, followed by a suitable affine transformation, maps $P$ onto a
standard heptagon.
\end{proof}

Hence, having a non-crossing standardization line characterizes standard
heptagons up to projective equivalence. Our next step is to show that every
heptagon has a non-crossing standardization line
(Proposition~\ref{prop:noncrossingstandardization}). But to prove this, we still
need to introduce a couple of concepts.

Observe that $\ell_i$ cannot cross any
of the lines $\li{p_{i+1}}{p_{i+2}}$, $\li{p_{i+0}}{p_{i+3}}$,
$\li{p_{i-1}}{p_{i-2}}$ and $\li{p_{i+0}}{p_{i-3}}$ in the interior of $P$,
since by construction their intersection point is $p_i^\pm$, which lies outside~$P$
(compare Figure~\ref{fig:standardizationlines}).
In particular, if $\ell_i$ intersects~$P$, either it separates $p_{i+1}$ and~$p_{i+2}$ from the remaining vertices of~$P$, or it separates $p_{i-1}$ and~$p_{i-2}$.

\begin{definition}
If the standardization line $\ell_i$ separates $p_{i+1}$ and $p_{i+2}$ from the remaining vertices of $P$, we say that it is \defn{$+$-crossing}; if it separates $p_{i-1}$~and~$p_{i-2}$ it is \defn{$-$-crossing}.
In the example of Figure~\ref{fig:standardizationlines}, $\ell_{-3}$ is $-$-crossing.

\end{definition}

\begin{definition}
The lines $\li{p_{i}}{p_{i+3}}$ and $\li{p_{i+1}}{p_{i+2}}$ partition the projective plane~$\PP^2$ into two disjoint angular sectors (cf.
Figure~\ref{fig:sector}). One of them contains the points 
$p_{i-1}$, $p_{i-2}$ and $p_{i-3}$, while the interior of the other is empty of vertices of~$P$. We denote this empty sector~\defn{$S_i^+$}.
Similarly,  \defn{$S_i^-$}~is the sector formed by
$\li{p_{i}}{p_{i-3}}$ and $\li{p_{i-1}}{p_{i-2}}$ that contains no vertices of~$P$.
\end{definition}

\begin{figure}
\centering
\subcaptionbox{The sector $S_0^+$ (shaded). The point $p_0^-$ lies in
$S_0^+$  if and only if $\ell_{0}$ is
$+$-crossing.\label{fig:sector}}[.45\textwidth]
{\includegraphics[width=.4\textwidth]{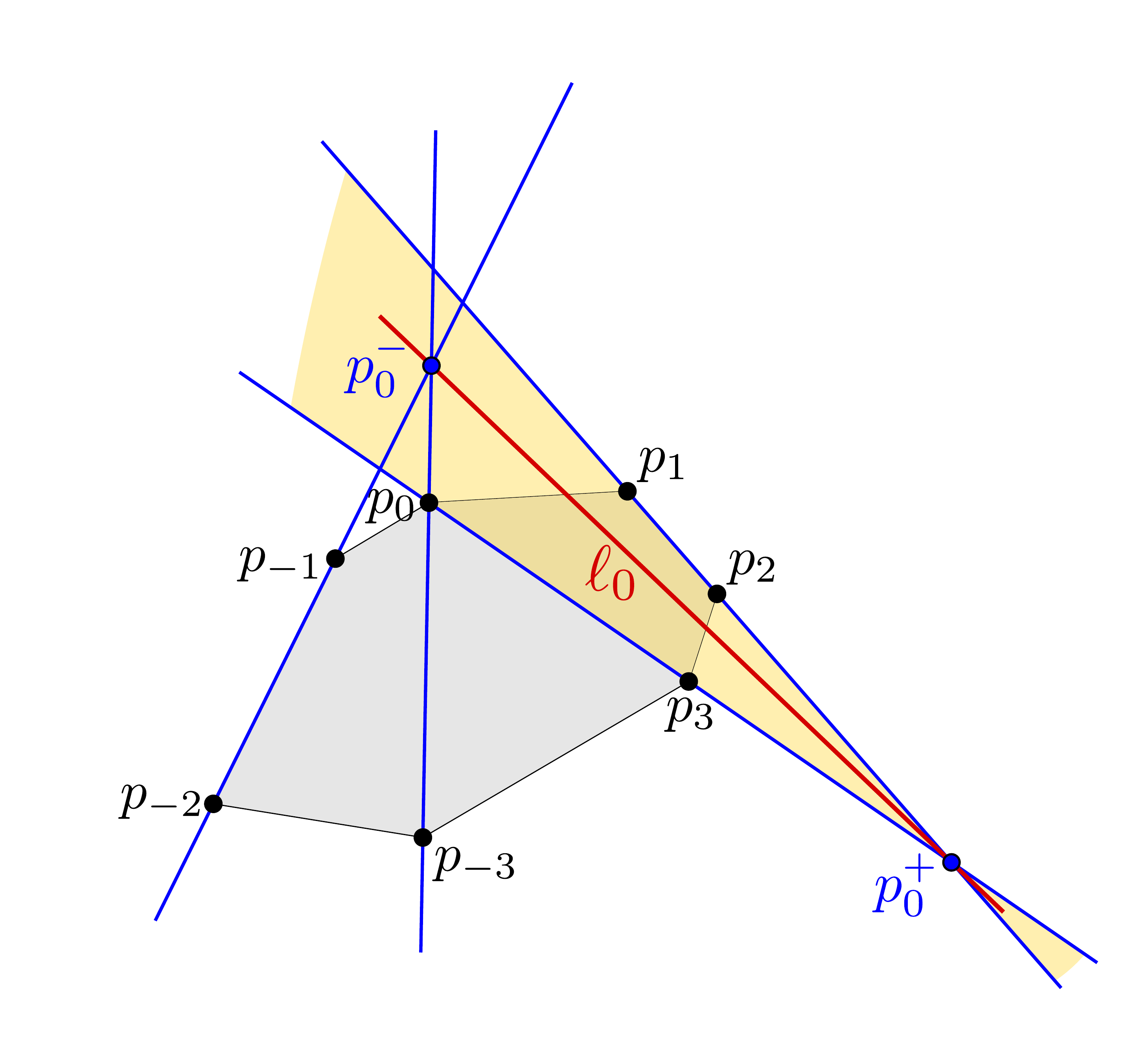}}\qquad
\subcaptionbox{The point $p_0^-$ cannot lie in both shaded sectors
simultaneously.\label{fig:sectorcompatibility}}[.45\textwidth]
{\includegraphics[width=.4\textwidth]{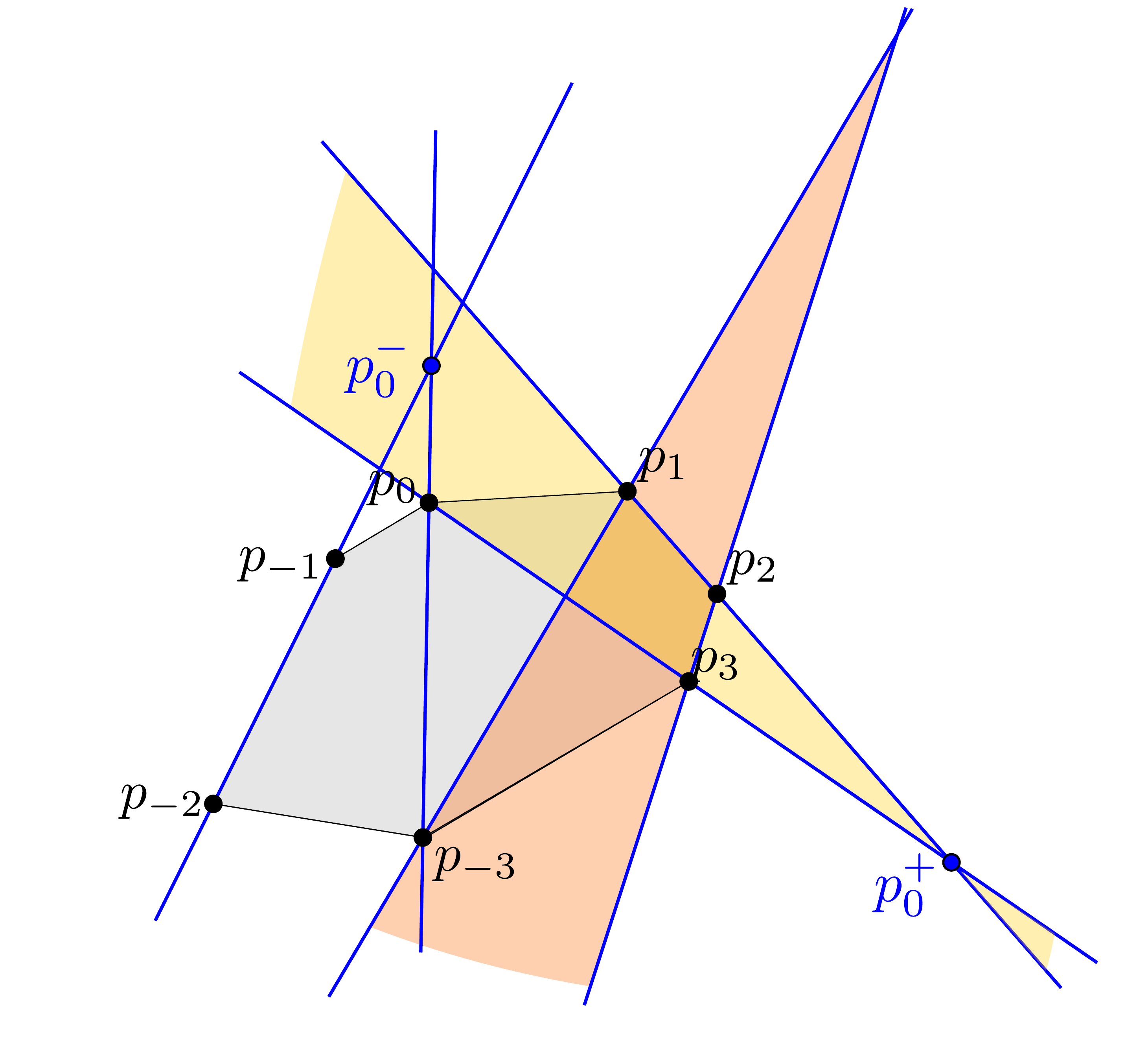}}
        \caption{The relevant angular sectors.}\label{fig:sectors}
\end{figure}

These sectors allow us to characterize $\pm$-crossing standardization lines.
\begin{lemma}\label{lem:crossingcharacterization}
 The standardization line $\ell_i$ is $+$-crossing if and only if  $p_i^-\in S_i^+$.
 Analogously, $\ell_i$ is $-$-crossing if and only if $p_i^+\in S_i^-$.
 \end{lemma}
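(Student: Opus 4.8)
The plan is to prove Lemma~\ref{lem:crossingcharacterization} by unwinding the definitions and reducing the claim about the standardization line~$\ell_i$ to an equivalent claim about the single point~$p_i^-$. Recall that $\ell_i = p_i^+ \wedge p_i^-$, so $\ell_i$ passes through both $p_i^+$ and $p_i^-$. The key observation is that the point $p_i^+ = \li{i+1}{i+2}\vee\li{i}{i+3}$ is, by construction, the apex of the angular sector $S_i^+$: it is exactly the intersection of the two lines $\li{p_i}{p_{i+3}}$ and $\li{p_{i+1}}{p_{i+2}}$ that bound $S_i^+$. I would state this identification explicitly as the first step, since it is what ties the two definitions together.

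The second step is to analyze what $+$-crossing means geometrically. By the discussion preceding the statement, if $\ell_i$ crosses~$P$ and is $+$-crossing, it separates $p_{i+1}$ and $p_{i+2}$ from the remaining vertices. Since $\ell_i$ cannot cross the four lines $\li{p_{i+1}}{p_{i+2}}$, $\li{p_i}{p_{i+3}}$, $\li{p_{i-1}}{p_{i-2}}$, $\li{p_i}{p_{i-3}}$ inside~$P$, the portion of $\ell_i$ cutting through $P$ to isolate $p_{i+1},p_{i+2}$ must lie within the wedge cut out near the apex $p_i^+$. I would argue that $\ell_i$ enters the region on the $\{p_{i+1},p_{i+2}\}$-side precisely when the second point determining $\ell_i$, namely $p_i^-$, lies inside the empty sector $S_i^+$: because $\ell_i$ emanates from the apex $p_i^+$, the line sweeps into the interior of $P$ on the $+$-side exactly when its other defining point $p_i^-$ falls in the sector $S_i^+$ rather than in the opposite sector (which contains $p_{i-1},p_{i-2},p_{i-3}$). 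This is essentially the content of the caption of Figure~\ref{fig:sector}. I would make this rigorous by noting that a line through the apex $p_i^+$ crosses into the interior of $P$ in a $+$-crossing manner if and only if its direction points into $S_i^+$, which happens exactly when the other point $p_i^-$ on the line lies in $S_i^+$.

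For the converse direction, I would check that if $p_i^- \in S_i^+$, then the line $\ell_i = p_i^+ \wedge p_i^-$ must separate $p_{i+1}$ and $p_{i+2}$ from the rest: since $S_i^+$ is the sector empty of vertices and is bounded by the two lines whose intersection with $\ell_i$ is $p_i^+$, a line through $p_i^+$ heading toward a point of $S_i^+$ necessarily slices off exactly the wedge containing $p_{i+1}$ and $p_{i+2}$. The symmetric statement ($\ell_i$ is $-$-crossing iff $p_i^+\in S_i^-$) follows by exchanging the roles of the $+$ and $-$ indices, i.e.\ replacing $i+k$ by $i-k$ throughout, since the construction of $\ell_i$, $p_i^{\pm}$ and the sectors $S_i^{\pm}$ is symmetric under this reflection.

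The main obstacle I anticipate is making the "sweep" argument fully rigorous in the projective setting, where notions like "separates" and "points into the sector" must be handled carefully: a projective line does not have a well-defined interior, and the sector $S_i^+$ is delimited by lines that meet at~$p_i^+$, so one must be careful about which of the two angular regions at $p_i^+$ is meant and about the behavior at the line at infinity. I would address this by working in an affine chart in which $P$ is an ordinary convex polygon and the relevant apex points are finite, reducing the claim to the elementary planar fact that a line through a vertex of an angular sector enters the sector exactly when it contains an interior point of the sector. Given the preceding discussion already established that $\ell_i$ is either $+$-crossing or $-$-crossing whenever it meets~$P$, this local analysis at the apex suffices to pin down which case occurs.
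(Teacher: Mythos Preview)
Your approach is essentially the same as the paper's. Both hinge on the observation that $p_i^+$ is the apex of the sector $S_i^+$, so the line $\ell_i=p_i^+\wedge p_i^-$ through that apex lies in $S_i^+$ (equivalently, is $+$-crossing) precisely when its second defining point $p_i^-$ lies in $S_i^+$; the paper compresses this into three sentences, phrasing the intermediate step as ``$\ell_i\subset S_i^+$'' rather than your ``direction points into $S_i^+$'', but the content is identical. Your worries about the projective setting are legitimate but not addressed any more carefully in the paper; in both cases the argument is really the elementary affine fact you name at the end.
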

\begin{proof}
The line $\ell_i$ is $+$-crossing when it separates $p_i$ and $p_{i+1}$ from the rest of~$P$; this happens if and only if $\ell_i\subset S_i^+$. Since $\li{p_{i-1}}{p_{i-2}}\cap\ell_i = p_i^-$, this is equivalent to $p_i^-\in S_i ^+$.
The case of $-$-crossing follows analogously.
\end{proof}

 With this characterization, we can easily prove the following compatibility condition.
\begin{lemma}\label{lem:sectorcompatibility}
 If $\ell_i$ is $+$-crossing, then $\ell_{i-3}$ cannot be $-$-crossing. Analogously, if $\ell_i$ is $-$-crossing, then $\ell_{i+3}$ cannot be $+$-crossing.
\end{lemma}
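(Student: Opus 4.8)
The plan is to reduce everything to a single coincidence that links the two standardization lines. First I would observe that the outer point defining $\ell_i$ on its minus side is literally the same as the one defining $\ell_{i-3}$ on its plus side:
\[
p_i^- = \li{p_{i-1}}{p_{i-2}}\vee\li{p_i}{p_{i-3}} = \li{p_{i-2}}{p_{i-1}}\vee\li{p_{i-3}}{p_i} = p_{i-3}^+ .
\]
Call this common point $q$. Being one of the points $p_j^{\pm}$, it lies outside $P$, and by the very definition of the standardization lines it lies on both of them: $q\in\ell_i=p_i^+\wedge p_i^-$ and $q\in\ell_{i-3}=p_{i-3}^+\wedge p_{i-3}^-$. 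So the statement is really about two lines through one fixed external point $q$.

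Next I would translate the two crossing hypotheses into separation statements through $q$. By definition, ``$\ell_i$ is $+$-crossing'' means $\ell_i$ separates $\{p_{i+1},p_{i+2}\}$ from the other five vertices. Reducing indices modulo $7$, ``$\ell_{i-3}$ is $-$-crossing'' means $\ell_{i-3}$ separates $\{p_{(i-3)-1},p_{(i-3)-2}\}=\{p_{i+3},p_{i+2}\}$ from the rest. Hence, assuming both hypotheses, there are two lines through $q$, namely $\ell_i$ and $\ell_{i-3}$, one cutting off the pair $\{p_{i+1},p_{i+2}\}$ and the other the pair $\{p_{i+2},p_{i+3}\}$. (Via Lemma~\ref{lem:crossingcharacterization} this is exactly the assertion that $q=p_i^-=p_{i-3}^+$ lies in both $S_i^+$ and $S_{i-3}^-$, which is what Figure~\ref{fig:sectorcompatibility} forbids.)

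The crux is then a convexity fact about which vertex pairs an external point can cut off. Since $q\notin P$ and $P$ is convex, $P$ lies in the cone at $q$ bounded by the two tangent lines from $q$, a cone of opening less than $180^\circ$; ordering the vertices by direction as seen from $q$ yields a linear order $p_{\sigma(0)},\dots,p_{\sigma(6)}$, and every line through $q$ cuts this order into a prefix and a suffix. Therefore the only two-element subsets separable from the rest by a line through $q$ are the extreme pairs $\{p_{\sigma(0)},p_{\sigma(1)}\}$ and $\{p_{\sigma(5)},p_{\sigma(6)}\}$, which are disjoint because $P$ has $7>4$ vertices. But $\{p_{i+1},p_{i+2}\}$ and $\{p_{i+2},p_{i+3}\}$ share $p_{i+2}$, so they cannot both be separable; this contradiction establishes the first claim. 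The case $q$ at infinity is handled identically, replacing ``direction from $q$'' by ``position along the direction orthogonal to $q$''.

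Finally, the second assertion needs no separate work: writing $m:=i-3$, the first claim reads ``$\ell_{m+3}$ is $+$-crossing $\Rightarrow$ $\ell_m$ is not $-$-crossing'', whose contrapositive is ``$\ell_m$ is $-$-crossing $\Rightarrow$ $\ell_{m+3}$ is not $+$-crossing'', which is exactly the second statement. I expect the main obstacle to be the convexity fact in the third paragraph, and in particular checking that $q$ really is a genuine external (or infinite) point so that the tangent-cone and prefix–suffix description applies; the index bookkeeping and the coincidence $p_i^-=p_{i-3}^+$ are routine once noticed.
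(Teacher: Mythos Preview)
Your proof is correct and rests on the same pivotal observation as the paper: $p_i^- = p_{i-3}^+$, so both standardization lines pass through a single external point $q$. From there the paper and you finish differently. The paper invokes Lemma~\ref{lem:crossingcharacterization} directly (which you cite only parenthetically), placing $q$ simultaneously in the sectors $S_i^+$ and $S_{i-3}^-$, and then observes from Figure~\ref{fig:sectorcompatibility} that the intersection of these two sectors lies inside $P$, contradicting $q\notin P$. You instead argue intrinsically from convexity: all vertices sit in the tangent cone at $q$, any line through $q$ cuts them into a prefix and a suffix of the angular order, so the only $2$-subsets a line through $q$ can isolate are the two extreme pairs, which are disjoint since $7>4$; hence the overlapping pairs $\{p_{i+1},p_{i+2}\}$ and $\{p_{i+2},p_{i+3}\}$ cannot both be cut off. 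Your route is slightly longer but has the virtue of being fully written out rather than referred to a picture, and it makes transparent why the argument needs nothing about $P$ beyond convexity and the location of $q$ outside it. One small remark: your phrase ``the extreme pairs'' tacitly assumes the angular order is strict, whereas here $q$ lies on $\li{p_{i-1}}{p_{i-2}}$ and on $\li{p_i}{p_{i-3}}$, so there are ties; the prefix/suffix conclusion still holds for \emph{strictly} separated subsets, so the contradiction goes through unchanged, but it would be worth a half-sentence acknowledging this.
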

\begin{proof}
 Both statements are equivalent by symmetry. We assume that $\ell_i$ is $+$-crossing and $\ell_{i-3}$ is $-$-crossing to reach a contradiction.

 Observe that $p_i^-=p_{i-3}^+$ by definition. By Lemma~\ref{lem:crossingcharacterization}, $p_i^-$ must lie both in the sector formed by $\li{p_{i}}{p_{i+3}}$ and $\li{p_{i+1}}{p_{i+2}}$ and in the sector formed by $\li{p_{i-3}}{p_{i+1}}$ and $\li{p_{i+3}}{p_{i+2}}$. However, the intersection of these two sectors lies in the interior of the polygon (cf. Figure~\ref{fig:sectorcompatibility}), while $p_i^-$ lies outside.
\end{proof}

\begin{corollary}
  \label{cor:allcrossing}
  If all the standardization lines $\ell_i$ intersect $P$, they are either all $+$-crossing or all $-$-crossing.
\end{corollary}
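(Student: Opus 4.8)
The plan is to derive the corollary directly from the compatibility condition in Lemma~\ref{lem:sectorcompatibility}, using the hypothesis that \emph{every} standardization line crosses $P$ together with the dichotomy already established before that lemma: any crossing $\ell_i$ must be either $+$-crossing or $-$-crossing (since $\ell_i$ cannot meet the four construction lines inside $P$, it separates either $\{p_{i+1},p_{i+2}\}$ or $\{p_{i-1},p_{i-2}\}$ from the rest). So under the hypothesis each of the seven lines $\ell_0,\dots,\ell_6$ carries a well-defined sign in $\{+,-\}$, and the task is purely combinatorial: show that a sign assignment on $\ZZ/7\ZZ$ satisfying the forbidden patterns of Lemma~\ref{lem:sectorcompatibility} must be constant.

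First I would restate Lemma~\ref{lem:sectorcompatibility} as a constraint on the sign function $\sigma\colon\ZZ/7\ZZ\to\{+,-\}$: the lemma forbids the pattern $\sigma(i)=+$, $\sigma(i-3)=-$, and symmetrically forbids $\sigma(i)=-$, $\sigma(i+3)=+$. Reindexing, both statements say the same thing: we can never have $\sigma(j)=-$ and $\sigma(j+3)=+$. Equivalently, stepping by $+3$ can never take a $-$ to a $+$. The strategy is then to exploit that $3$ generates $\ZZ/7\ZZ$, so iterating the step $+3$ visits all seven indices in a single cycle $i\to i+3\to i+6\to\cdots$ before returning to $i$.

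The key step is to walk around this $7$-cycle. Since a step of $+3$ is never allowed to go from $-$ to $+$, along the cyclic sequence $\sigma(i),\sigma(i+3),\sigma(i+6),\dots$ we can never transition from $-$ to $+$. But on a cycle, the number of $-\!\to\!+$ transitions equals the number of $+\!\to\!-$ transitions; if the former is zero, the latter is zero too, so there are no sign changes at all around the cycle. Because the cycle includes every element of $\ZZ/7\ZZ$, the function $\sigma$ is constant, which is exactly the claim that all the lines are $+$-crossing or all are $-$-crossing.

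The main obstacle, such as it is, is simply making the reindexing of Lemma~\ref{lem:sectorcompatibility} into a single clean prohibition ("no $-\!\to\!+$ step under $+3$") fully transparent, and then invoking the cycle-parity observation correctly; one must be a little careful that the two symmetric halves of the lemma really collapse to one forbidden transition rather than two independent ones, and that $\gcd(3,7)=1$ is what guarantees the single Hamiltonian cycle on $\ZZ/7\ZZ$. No real geometry remains at this stage, since all the geometric content has been absorbed into Lemmas~\ref{lem:crossingcharacterization} and~\ref{lem:sectorcompatibility}.
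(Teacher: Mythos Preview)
Your argument is correct and uses exactly the ingredients the paper intends: the dichotomy that every crossing $\ell_i$ is either $+$- or $-$-crossing, Lemma~\ref{lem:sectorcompatibility}, and the fact that $3$ generates $\ZZ/7\ZZ$. The paper gives no explicit proof of this corollary, and the implicit one is the same as yours, though it can be phrased a touch more directly: under the hypothesis, ``$\ell_i$ is $+$-crossing'' forces ``$\ell_{i-3}$ is $+$-crossing'' (since $\ell_{i-3}$ crosses and cannot be $-$-crossing), so $+$-crossing propagates along the cycle $i\mapsto i-3$ and hence to every index; your cycle-parity argument reaches the same conclusion by a slightly longer route.
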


\subsection{Every heptagon has a non-crossing standardization line}

We are finally ready to present and prove
Proposition~\ref{prop:noncrossingstandardization}.
In essence, we prove that the combinatorics of the pseudo-line arrangement in
Figure~\ref{fig:nonrealizable} are not realizable by an arrangement of 
straight lines in the projective plane. Here the ``combinatorics'' refers to the order 
of the intersection points in each projective pseudo-line.
However, any heptagon that had only $+$-crossing standardization lines would provide such
a realization (compare the characterization of Lemma~\ref{lem:crossingcharacterization}).

\begin{figure}[htpb]
 \centering
 \includegraphics[width=.55\linewidth]{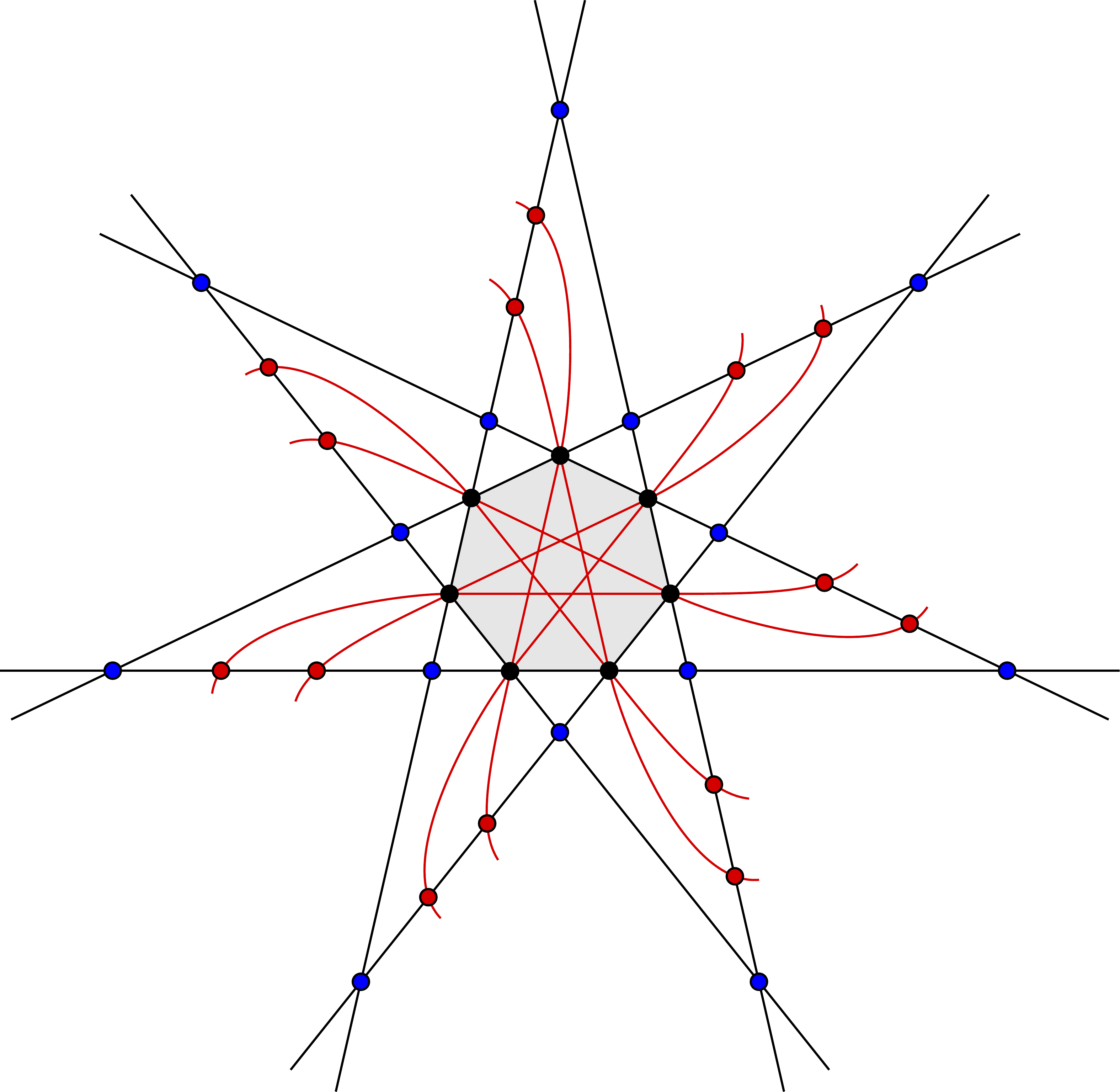}
 \caption{A non-stretchable pseudo-line arrangement.}\label{fig:nonrealizable}
\end{figure}

For the proof, we will need the formula \begin{equation}\label{eq:quadprod}  ({a \times b} ){\times} ({c}\times {d}) = [{a,\, b,\, d}]\,  c - [{a,\, b,\, c}] \, d ,
  \end{equation}
  where $[{a,\, b,\, c}]= \det(a,b,c)$ is the $3\times 3$-determinant formed by
the homogeneous coordinates of the corresponding points. That is, 
$[{p_x,\ p_y, \ p_z}]=\pm 2 \Vol\left(\conv\{{p_x,\ p_y, \ p_z}\}\right)$.
Observe that, since the vertices of the heptagon are labeled clockwise and are in convex position, $ [{p_x,\ p_y, \ p_z}]>0$ whenever $z$ lies in the interval $(x,y)$.
 To simplify the notation, in what follows we abbreviate $[{p_{i+x},\ p_{i+y}, \ p_{i+z}}]$ as $[x,y,z]_i$, for any $x,y,z,i\in \ZZ/7\ZZ$.

\begin{lemma}\label{lem:alg_characterization}
 With the notation from above, the standardization line $\ell_i$ is $+$-crossing
 if and only if 
  \begin{align}\label{eq:bdycondition1}
   [p_{i+2},p_{i+1}, p_i^-]= [- 1,-2,-3]_{i}[2, 1, 0]_{i} - [- 1,-2,0]_i[2, 1,-3]_{i}\geq 0;
  \end{align}
and is $-$-crossing if and only if 
\begin{align}\label{eq:bdycondition2}
[p_{i-2},p_{i-1},p_i^+]= [1,2,3]_i[-2,-1, 0]_{i} - [1,2,0]_{i}[-2,-1, 3]_{i} \geq 0.
  \end{align}
\end{lemma}
\begin{proof}
 Using \eqref{eq:quadprod}, the coordinates of the standardization point
$p_i^-$ are given by
  \begin{align*}
    p_i^-
    &= (p_{i- 1}\wedge p_{i- 2})\vee(p_{i}\wedge p_{i- 3})
    =(p_{i- 1}\times p_{i- 2})\times(p_{i}\times p_{i- 3})\\
    &\stackrel{\eqref{eq:quadprod}}{=}
    \phantom{-}[- 1,-2, -3]_ i\  p_{i} - [- 1,-2,0]_i\  p_{i-3}.
  \end{align*}

  Observe that $[p_{i+3},p_i,p_i^-]<0$ since
  \begin{align*}
    [p_{i+3},p_i,p_i^-]&=
    [- 1,-2, -3]_ i[3,0, 0]_ i - [- 1,-2,0]_i[3,0,-3]_ i\\
     &=\phantom{[- 1,-2, -3]_ i[3,0, 0]_ i} - [- 1,-2,0]_i[3,0, -3]_ i
     \ < \ 0,
  \end{align*}
  because $[3,0,0]_i=0$ and $[- 1,-2,0]_i > 0$, $[3,0, -3]_ i > 0$ by convexity. %

  Therefore, in view of Lemma~\ref{lem:crossingcharacterization}, requiring  $\ell_i$ to be $+$-crossing reduces to the equation $[p_{i+2},p_{i+1}, p_i^-]\geq 0$, since otherwise $ p_i^-$ would not lie in the desired sector. This expression can be reformulated as~\eqref{eq:bdycondition1}. The proof of~\eqref{eq:bdycondition2} is analogous.
\end{proof}

\begin{proposition}\label{prop:noncrossingstandardization}
 Every heptagon has at least one non-crossing standardization line.
\end{proposition}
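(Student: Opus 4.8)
The plan is to argue by contradiction: assume some heptagon $P$ has \emph{no} non-crossing standardization line, so that every $\ell_i$ crosses $P$. By Corollary~\ref{cor:allcrossing}, all seven $\ell_i$ are then of the same type, say all $+$-crossing (the $-$-crossing case being symmetric by the cyclic relabeling $i\mapsto -i$). By Lemma~\ref{lem:alg_characterization}, this means the seven inequalities of the form~\eqref{eq:bdycondition1}, one for each $i\in\ZZ/7\ZZ$, hold simultaneously. I would first rewrite each $[x,y,z]_i$ as a positive or negative quantity according to the convexity/orientation rule stated before Lemma~\ref{lem:alg_characterization} (a bracket $[x,y,z]_i$ is positive exactly when $z$ lies cyclically strictly between $x$ and $y$), and try to derive a purely combinatorial/numerical contradiction from having all seven inequalities at once.

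The cleanest route is the geometric reformulation the authors themselves flag just before the statement: a heptagon with only $+$-crossing standardization lines would realize, by straight lines, the pseudo-line arrangement of Figure~\ref{fig:nonrealizable}, which is asserted to be non-stretchable. So I would aim to \textbf{(i)} show that the simultaneous $+$-crossing conditions force the $21$ points $p_i$, $p_i^\pm$ and the seven lines $\li{i}{i+3}$, $\li{i+1}{i+2}$ to have a prescribed incidence/order structure, namely exactly the combinatorial type drawn in Figure~\ref{fig:nonrealizable}; and \textbf{(ii)} prove that this combinatorial type is not realizable by straight lines. For step (ii) the natural tool is a final-polynomial / biquadratic-final-polynomial argument, or an explicit chain of Grassmann–Plücker three-term relations: one multiplies together the seven inequalities \eqref{eq:bdycondition1} (each a difference of products of brackets) and shows, using the three-term Plücker syzygies among the $[x,y,z]_i$, that the product of all seven strictly-positive left-hand sides must also equal a manifestly \emph{negative} expression, an outright contradiction.

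Concretely, I expect each inequality \eqref{eq:bdycondition1} to be rearrangeable into the form $[- 1,-2,-3]_i\,[2,1,0]_i \ge [- 1,-2,0]_i\,[2,1,-3]_i$, where by convexity every bracket appearing is nonzero and its sign is known; after normalizing signs this becomes a ratio inequality $\prod_i \frac{[-1,-2,-3]_i[2,1,0]_i}{[-1,-2,0]_i[2,1,-3]_i}\ge 1$. Reindexing the $28$ brackets across all $i\in\ZZ/7\ZZ$ and matching identical brackets that occur with opposite roles in the numerator and denominator for different $i$, I would hope the whole product telescopes to a constant that is strictly less than $1$ — the desired contradiction. The bookkeeping here is exactly a discrete Pappus/Desargues-type cyclic cancellation, and whether everything cancels as cleanly as $1$ versus something $<1$ is precisely the arithmetic heart of non-stretchability.

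The main obstacle will be step (ii): turning the seven crossing inequalities into a rigorous non-realizability proof. Merely exhibiting the combinatorial type and citing a picture is not enough; I must either (a) track the sign of every one of the roughly two dozen $3\times 3$ determinants $[x,y,z]_i$ across all seven values of $i$ and find an explicit multiplicative Grassmann–Plücker relation that the seven positivity conditions jointly violate, or (b) set up a biquadratic final polynomial certificate. The delicate point is that I cannot appeal to convexity alone for the mixed brackets such as $[2,1,-3]_i$, whose sign depends on the cyclic position of index $-3$ relative to $2$ and $1$; I will need to verify each such sign carefully from the clockwise labeling convention, since a single misattributed sign would collapse the argument. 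I anticipate that after all signs are fixed, the contradiction falls out of a single well-chosen Plücker syzygy applied to the cyclic product, but isolating that syzygy is the crux.
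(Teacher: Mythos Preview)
Your overall framework---contradiction, reduction via Corollary~\ref{cor:allcrossing} to the all-$+$-crossing case, and then invoking Lemma~\ref{lem:alg_characterization} to obtain seven bracket inequalities---matches the paper exactly. The divergence, and the gap, is in how you propose to extract a contradiction from those inequalities.

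Your multiplicative telescoping does not work as stated. Writing $A_i=[-1,-2,-3]_i$, $B_i=[2,1,0]_i$, $C_i=[-1,-2,0]_i$, $D_i=[2,1,-3]_i$, one has $A_i=B_{i-3}$, so the numerator $\prod_i A_iB_i=(\prod_i B_i)^2$ collapses nicely; but the denominator brackets $C_i$ (triples of shape ``two consecutive plus the vertex one step back'') and $D_i$ (``two consecutive plus the vertex four steps ahead'') are genuinely different orbit types and do not cancel against each other or against the $B_i$'s. No single three-term Pl\"ucker relation bridges them, and a biquadratic final polynomial---if it exists---would require a nontrivial search you have not carried out. More seriously, the seven $+$-crossing conditions are only weak inequalities $A_iB_i\ge C_iD_i$, so even if the product collapsed to $1$ you would get $1\ge 1$, not a contradiction.

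The paper's route is additive, not multiplicative, and crucially uses a second batch of inequalities you overlooked: if $\ell_i$ is $+$-crossing then it is \emph{not} $-$-crossing, so by Lemma~\ref{lem:alg_characterization} the quantity $E_iF_i-G_iH_i$ (with $E_i=B_i$, $F_i=[-1,-2,3]_i$, $G_i=[2,1,3]_i$, $H_i=C_i$) is \emph{strictly} positive for every $i$. Summing all fourteen inequalities gives $\sum_i(A_iB_i-C_iD_i+E_iF_i-G_iH_i)>0$. The paper then proves, as a separate lemma valid for \emph{any} seven points, that this sum is identically zero---essentially by pairing $A_iB_i$ with $G_{i+3}H_{i+3}$ and using the affine identity $D_i+C_{i-3}=F_{i-2}+E_{i+1}$ to match the remaining terms. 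This additive identity is the missing ingredient; once you have it, the contradiction is immediate.
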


 \begin{proof}
  We want to prove that $P$ has at least one non-crossing standardization line.
  By Corollary~\ref{cor:allcrossing} (and symmetry), it is enough to prove that
it is impossible for all $\ell_i$ to be $+$-crossing. We will assume this to be
the case and reach a contradiction.

If $\ell_i$ is $+$-crossing for all $0\leq i\leq 6$, then by Lemma~\ref{lem:alg_characterization}, the coordinates of the vertices of $P$ fulfill 
\eqref{eq:bdycondition1} for all $i\in\ZZ/7\ZZ$. Moreover, if $\ell_i$ is $+$-crossing then it cannot be $-$-crossing. Therefore, again by Lemma~\ref{lem:alg_characterization}, one can see that the coordinates of the vertices of $P$ fulfill
  \begin{align}\label{eq:bdycondition3}
   [2,1,0]_{i}[-1,-2, 3]_{i} - [2,1,3]_i[-1,-2, 0]_{i}> 0,
  \end{align}
for all $i\in\ZZ/7\ZZ$.

  Therefore, if all the $\ell_i$ are $+$-crossing, the addition of the left hand sides of \eqref{eq:bdycondition1} and \eqref{eq:bdycondition3} for all $i\in\ZZ/7\ZZ$ should be positive. 
  
  With the abbreviations
\begin{align*}
 A_i&:=[- 1,-2,-3]_{i},& B_i&:=[2, 1, 0]_{i},& C_i&:=[- 1,-2,0]_i,\\
 D_i&:=[2, 1,-3]_{i},&  E_i&:=[2,1,0]_{i},& F_i&:=[-1,-2, 3]_{i},\\
 G_i &:=[2,1,3]_i,& H_i&:=[-1,-2, 0]_{i};
\end{align*}
 this can be expressed as
\begin{equation}\label{eq:globalcondition}
 \sum_{i\in \ZZ/7\ZZ}A_iB_i-C_iD_i+E_iF_i-G_iH_i>0 .
\end{equation}
However, it turns out that for every heptagon the equation 
\begin{equation}\label{eq:heptagonidentity}
 \sum_{i\in \ZZ/7\ZZ}A_iB_i-C_iD_i+E_iF_i-G_iH_i=0
\end{equation}
holds by the upcoming Lemma~\ref{lem:invariant}. This contradiction concludes the proof 
 that every heptagon has at least one standardization line.
\end{proof}

\begin{lemma}\label{lem:invariant}
Let $A$ be a configuration of $7$ points in $\EE^2\subset\PP^2$ labeled
$\set{a_i}{i\in \ZZ/7\ZZ}$. Denote the determinant $[{a_{i+x},\ a_{i+y}, \
a_{i+z}}]$ as $[x,y,z]_i$, for any $x,y,z,i\in \ZZ/7\ZZ$. Finally, let 
\begin{align*}
 A_i&:=[- 1,-2,-3]_{i},& B_i&:=[2, 1, 0]_{i},& C_i&:=[- 1,-2,0]_i,\\
 D_i&:=[2, 1,-3]_{i},&  E_i&:=[2,1,0]_{i},& F_i&:=[-1,-2, 3]_{i},\\
 G_i &:=[2,1,3]_i,& H_i&:=[-1,-2, 0]_{i};
\end{align*}

Then,
\begin{equation}\tag{\ref{eq:heptagonidentity}}
 \sum_{i\in \ZZ/7\ZZ}A_iB_i-C_iD_i+E_iF_i-G_iH_i=0.
\end{equation}
\end{lemma}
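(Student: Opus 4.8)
The identity \eqref{eq:heptagonidentity} is a \emph{polynomial} identity in the homogeneous coordinates of the seven points $a_0,\dots,a_6$, invariant under the cyclic shift $i\mapsto i+1$; my plan is to expand its left-hand side into a $\ZZ$-linear combination of products of two $3\times3$ brackets and to exhibit the cancellation explicitly. The first simplification is free: since $E_i=B_i$ and $H_i=C_i$, the summand splits as
\[
 A_iB_i-C_iD_i+E_iF_i-G_iH_i=(A_iB_i-G_iH_i)+(E_iF_i-C_iD_i),
\]
and I would treat the two groups separately, first rewriting each of the eight brackets $A_i,\dots,H_i$ in a canonical form by sorting the indices of its three points and recording the sign of the sorting permutation.

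Carrying out the sorting turns the first group into consecutive-against-consecutive brackets,
\[
 A_iB_i=[a_i,a_{i+1},a_{i+2}]\,[a_{i+4},a_{i+5},a_{i+6}],\qquad
 G_iH_i=[a_{i+1},a_{i+2},a_{i+3}]\,[a_i,a_{i+5},a_{i+6}],
\]
and here the cancellation is immediate: a shift of the summation index (replacing $i$ by $i-1$ in $\sum_i G_iH_i$, using $a_{i-1}=a_{i+6}$ and a cyclic reordering of the second factor) gives $\sum_{i}G_iH_i=\sum_{i}A_iB_i$, so that $\sum_{i}(A_iB_i-G_iH_i)=0$. For the second group the same sorting yields
\[
 E_iF_i=[a_i,a_{i+1},a_{i+2}]\,[a_{i+3},a_{i+5},a_{i+6}],\qquad
 C_iD_i=[a_i,a_{i+5},a_{i+6}]\,[a_{i+1},a_{i+2},a_{i+4}],
\]
and after a shift of the summation index $\sum_i C_iD_i$ becomes $\sum_i[a_i,a_{i+1},a_{i+2}]\,[a_{i+3},a_{i+4},a_{i+6}]$. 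Thus the whole statement reduces to the single cyclic identity
\[
 \sum_{i\in\ZZ/7\ZZ}[a_i,a_{i+1},a_{i+2}]\,[a_{i+3},a_{i+5},a_{i+6}]
 =\sum_{i\in\ZZ/7\ZZ}[a_i,a_{i+1},a_{i+2}]\,[a_{i+3},a_{i+4},a_{i+6}],
\]
whose two sides differ only in the middle vertex $a_{i+5}$ versus $a_{i+4}$ of the second bracket.

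This last identity is the crux, and is where the real content sits. No relabeling of $i$ can match the summands term by term — the two triples on each side have incompatible cyclic gap patterns — so this is a genuine consequence of the quadratic Grassmann--Plücker syzygies, not of mere reindexing. I would prove it by combining the three-term Plücker relations that relate the brackets on the four points $a_{i+3},a_{i+4},a_{i+5},a_{i+6}$ (which all share the pair $a_{i+3},a_{i+6}$), each multiplied by the consecutive bracket $[a_i,a_{i+1},a_{i+2}]$, and checking that the auxiliary brackets they introduce telescope around $\ZZ/7\ZZ$. The main obstacle is precisely to organize this telescoping with the correct signs. Since everything in sight is a fixed polynomial of low degree in the $14$ affine coordinates $a_i=(x_i,y_i,1)$, the same final step can alternatively (and safely) be discharged by a direct expansion — organized by the cyclic orbits of the bracket monomials, or handed to a computer algebra system — which I would keep as an independent verification.
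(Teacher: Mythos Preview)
Your approach is essentially the paper's: both split the sum into the same two groups, dispatch $\sum_i(A_iB_i-G_iH_i)$ by a cyclic reindexing, and reduce $\sum_i(E_iF_i-C_iD_i)$ to a single cyclic identity that needs one Pl\"ucker-type relation plus a telescoping. The step you leave unexecuted is short and is exactly what the paper does: the three-term relation on $a_{i+3},a_{i+4},a_{i+5},a_{i+6}$ gives
\[
[a_{i+3},a_{i+5},a_{i+6}]-[a_{i+3},a_{i+4},a_{i+6}]
=[a_{i+4},a_{i+5},a_{i+6}]-[a_{i+3},a_{i+4},a_{i+5}],
\]
so with $T_j:=[a_j,a_{j+1},a_{j+2}]$ your remaining identity becomes $\sum_i T_iT_{i+4}=\sum_i T_iT_{i+3}$, which holds because $4\equiv -3\pmod 7$; this is precisely the content of the paper's identities~\eqref{eq:Ci}--\eqref{eq:square} and the final telescoping, and no computer check is needed.
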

\begin{proof}
Although~\eqref{eq:heptagonidentity} can be checked purely algebraically, we provide a geometric interpretation. Observe that 
$[x,y,z]_i=\pm2\Vol(a_{i+x},a_{i+y},a_{i+z})$, 
which implies that the identity in~\eqref{eq:heptagonidentity} can be proved in terms of
(signed) areas of certain triangles spanned by $A$. Figure~\ref{fig:invariant} depicts some of these triangles when the points are
in convex position.

\begin{figure}[htpb]
 \centering
\begin{tabular}{cccc}
\includegraphics[width=.18\linewidth]{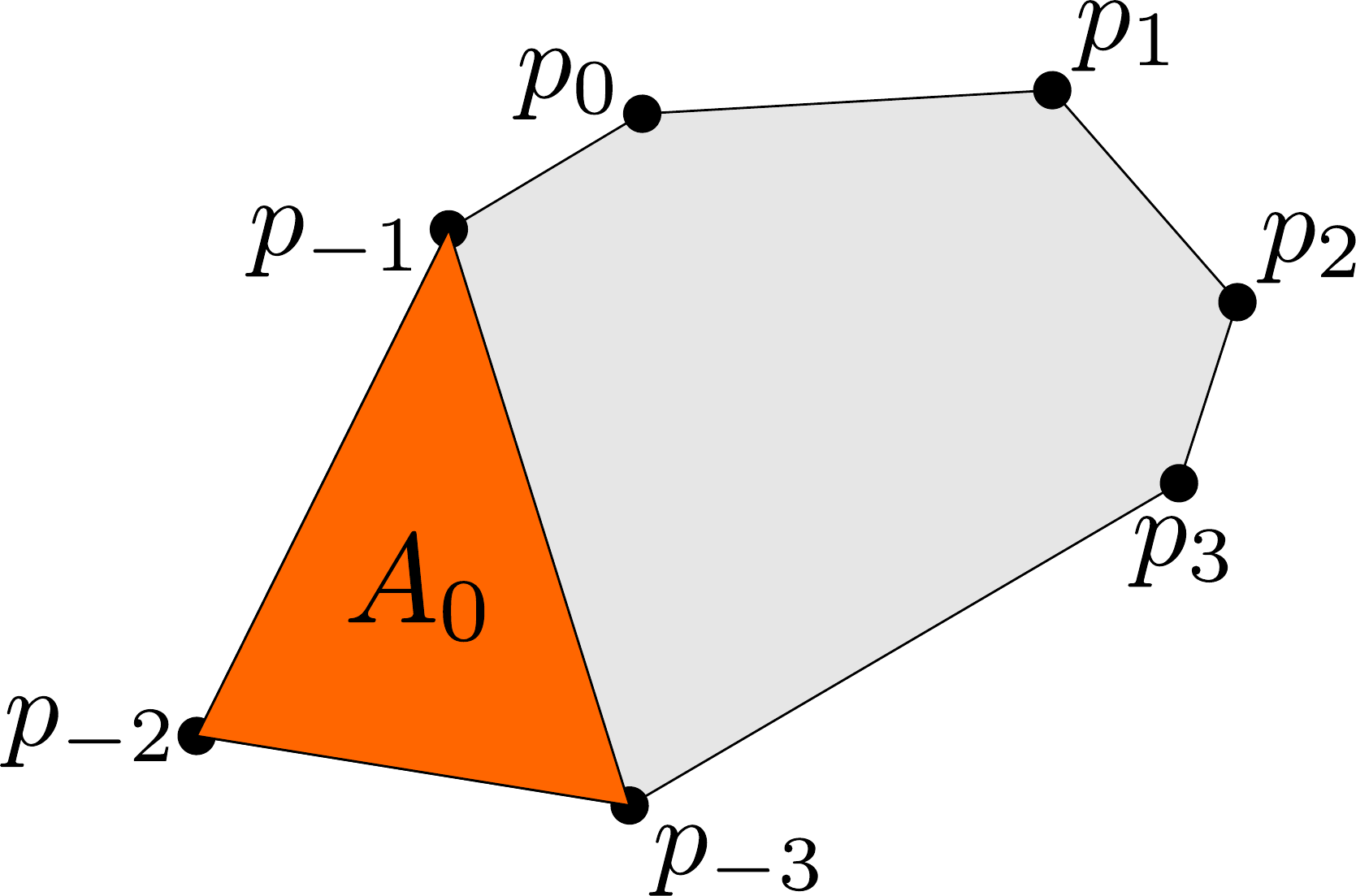}& 
\includegraphics[width=.18\linewidth]{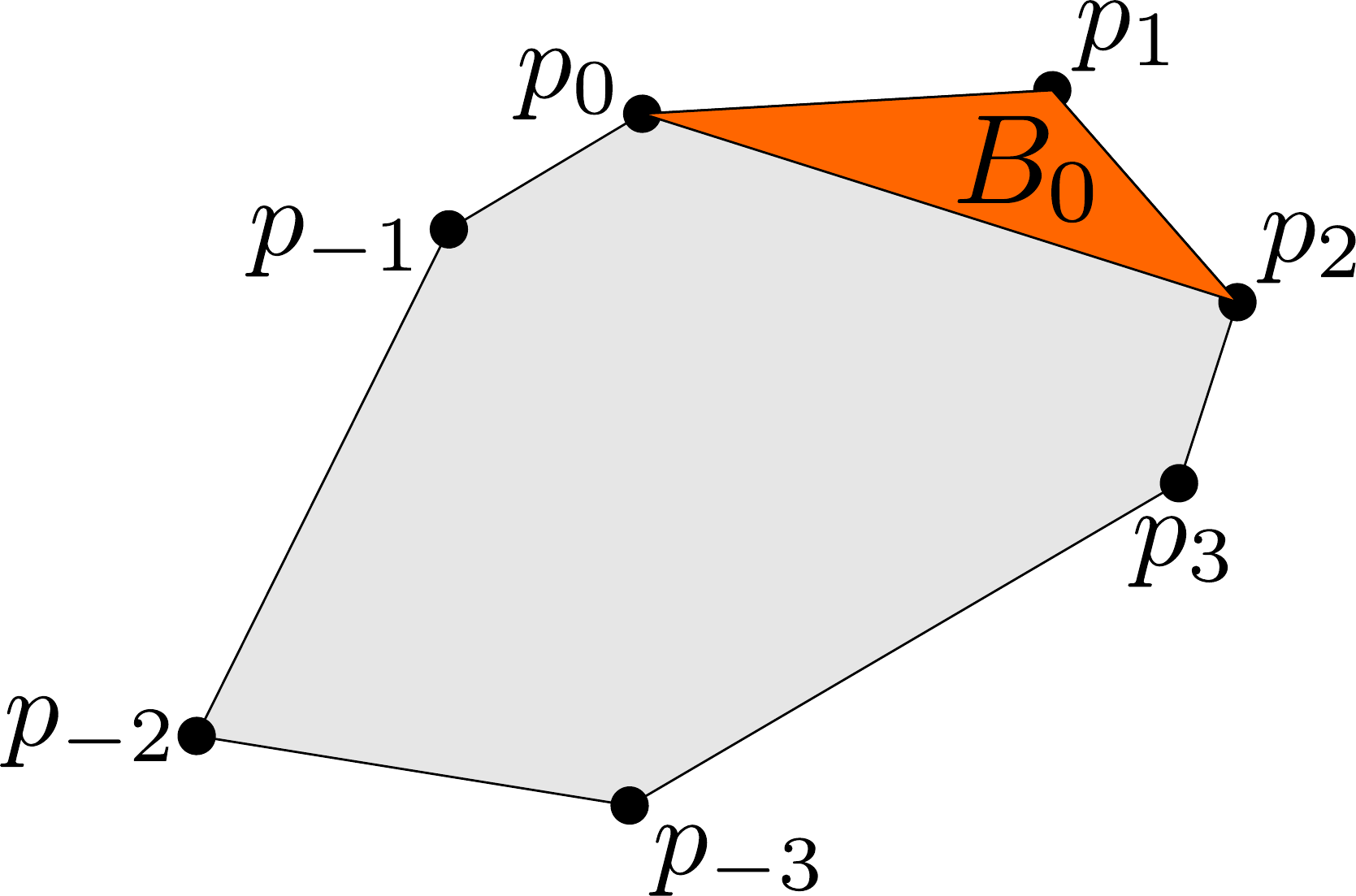}&
\includegraphics[width=.18\linewidth]{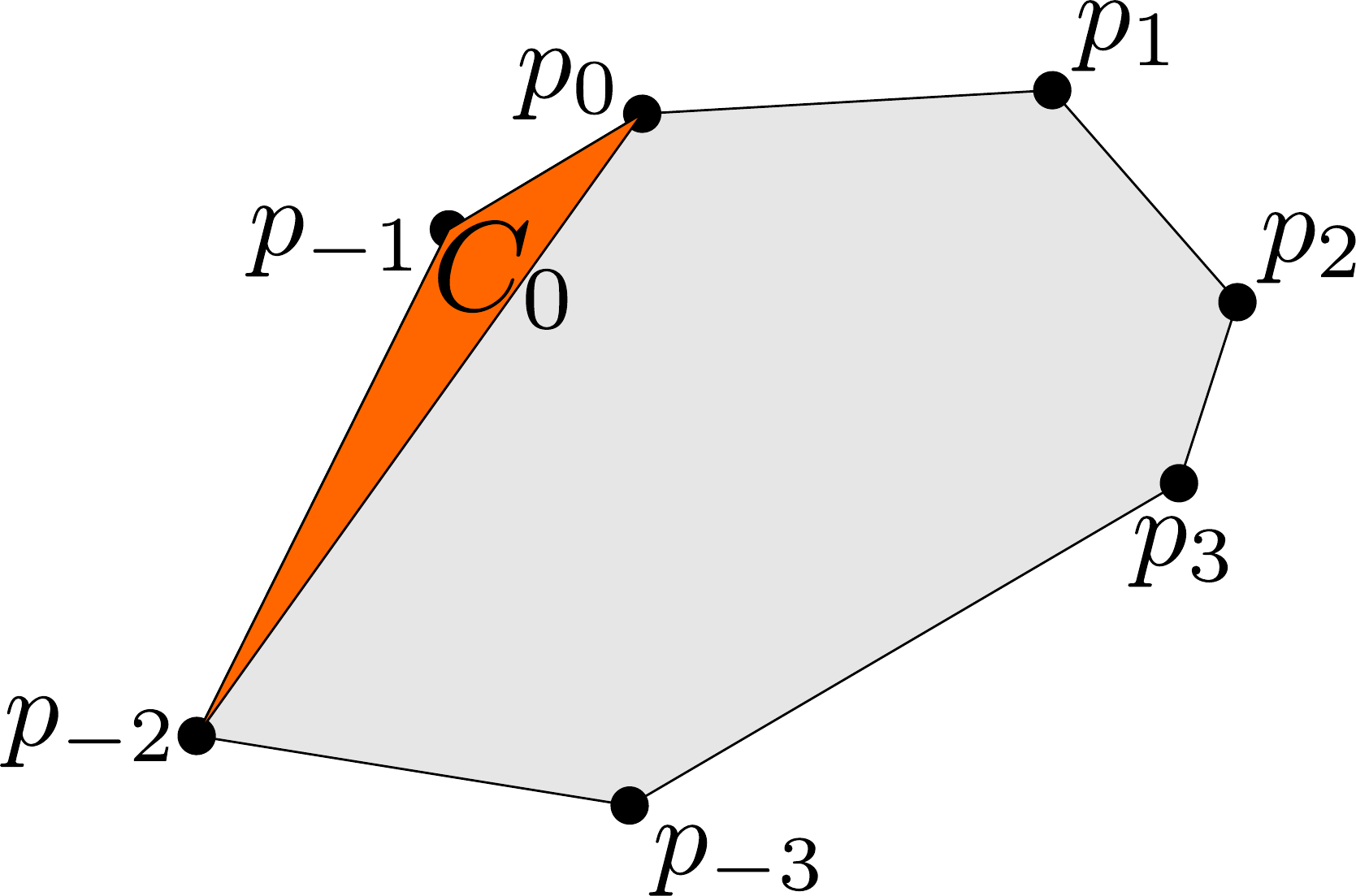}&
\includegraphics[width=.18\linewidth]{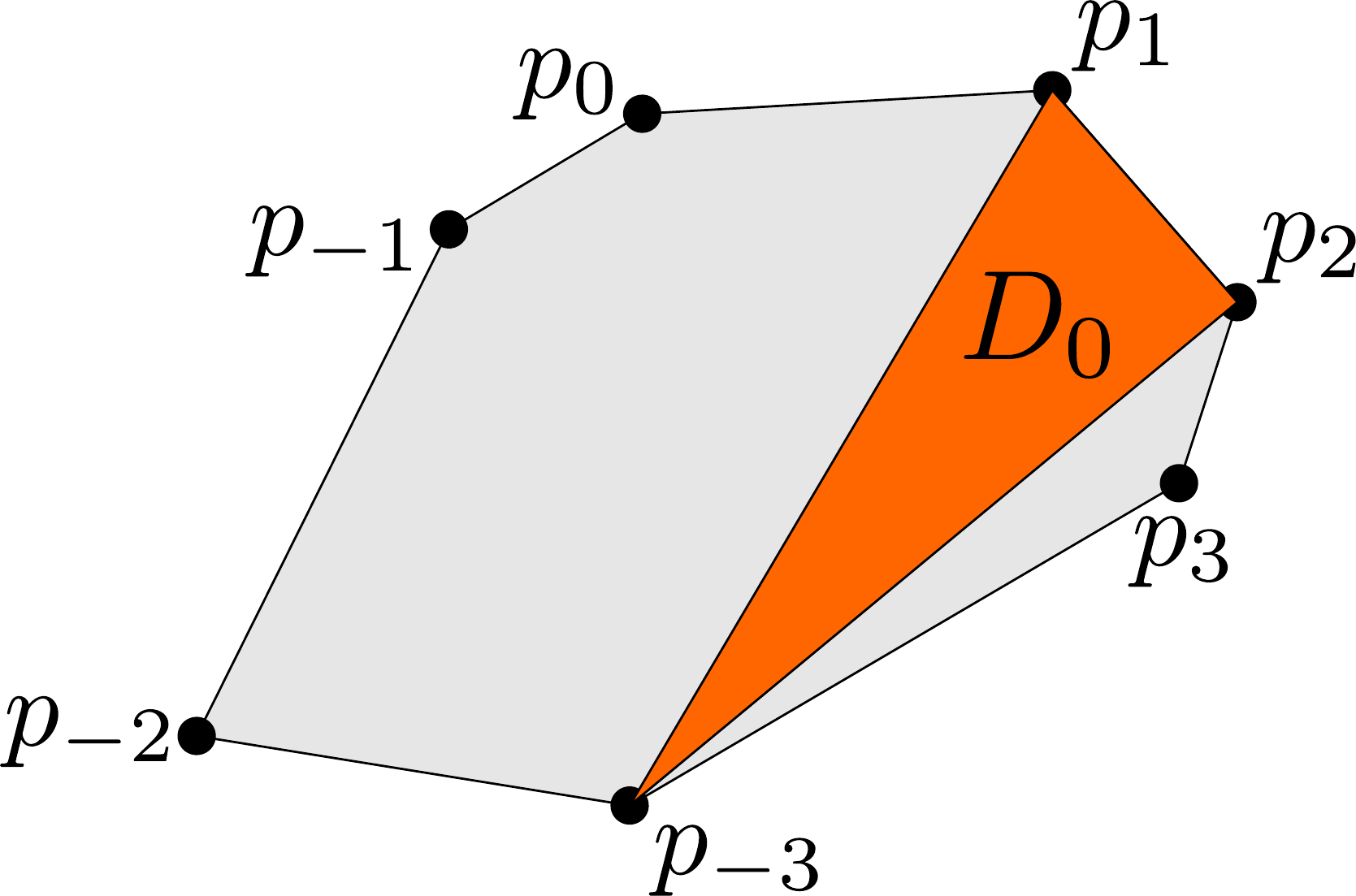}\\
$A_0$&$B_0$&$C_0$&$D_0$\\
\includegraphics[width=.18\linewidth]{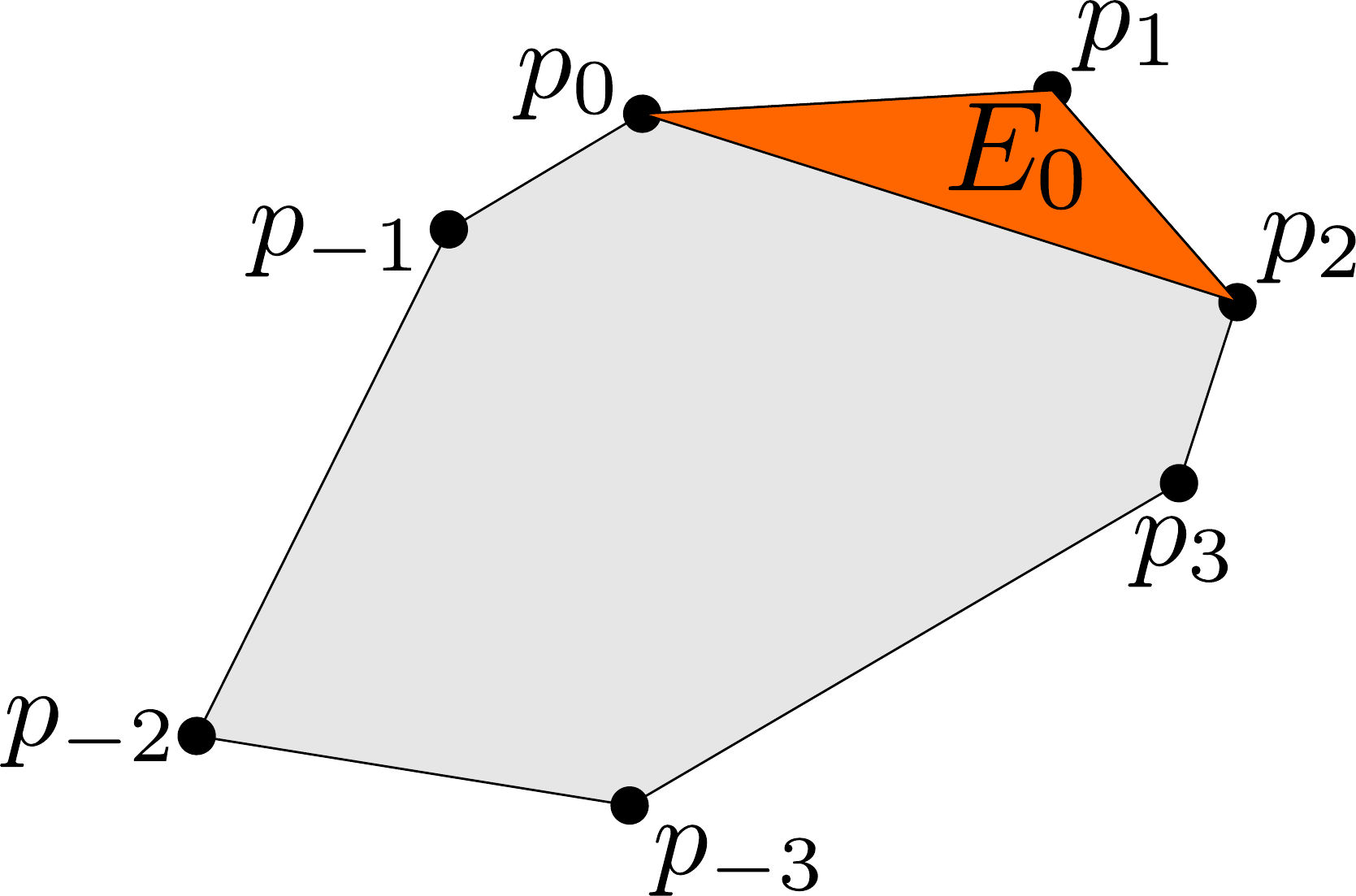}& 
\includegraphics[width=.18\linewidth]{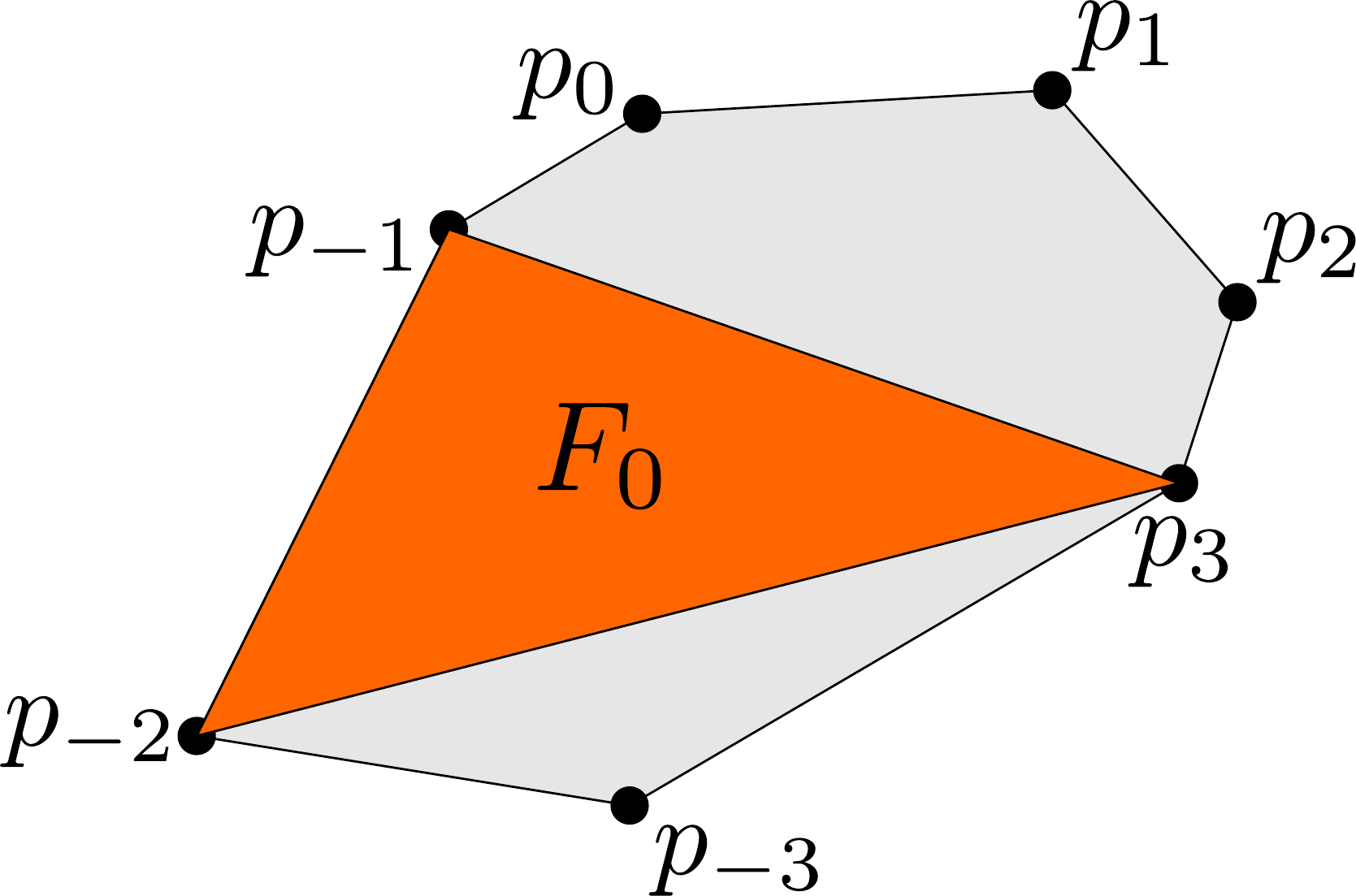}&
\includegraphics[width=.18\linewidth]{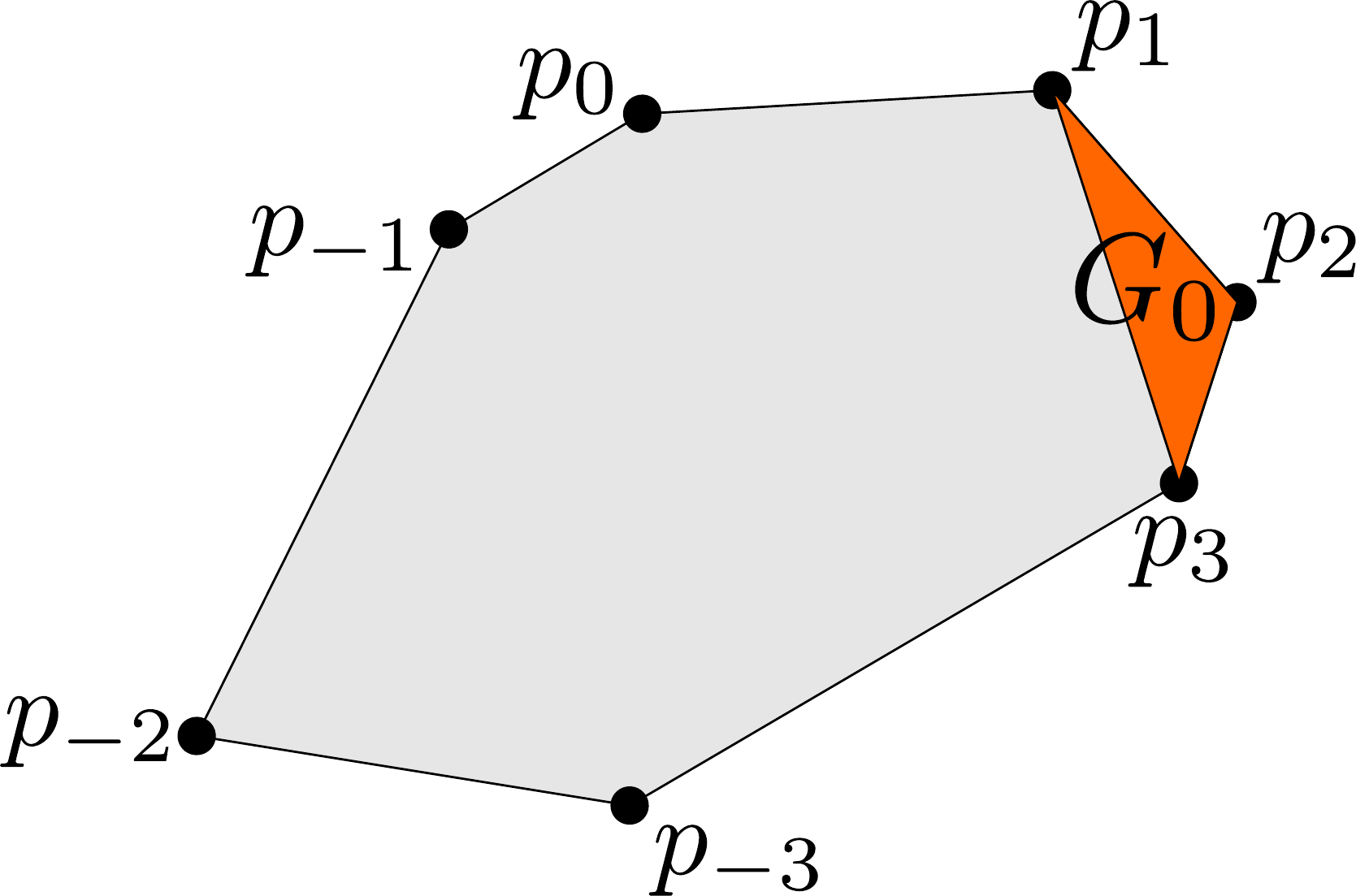}&
\includegraphics[width=.18\linewidth]{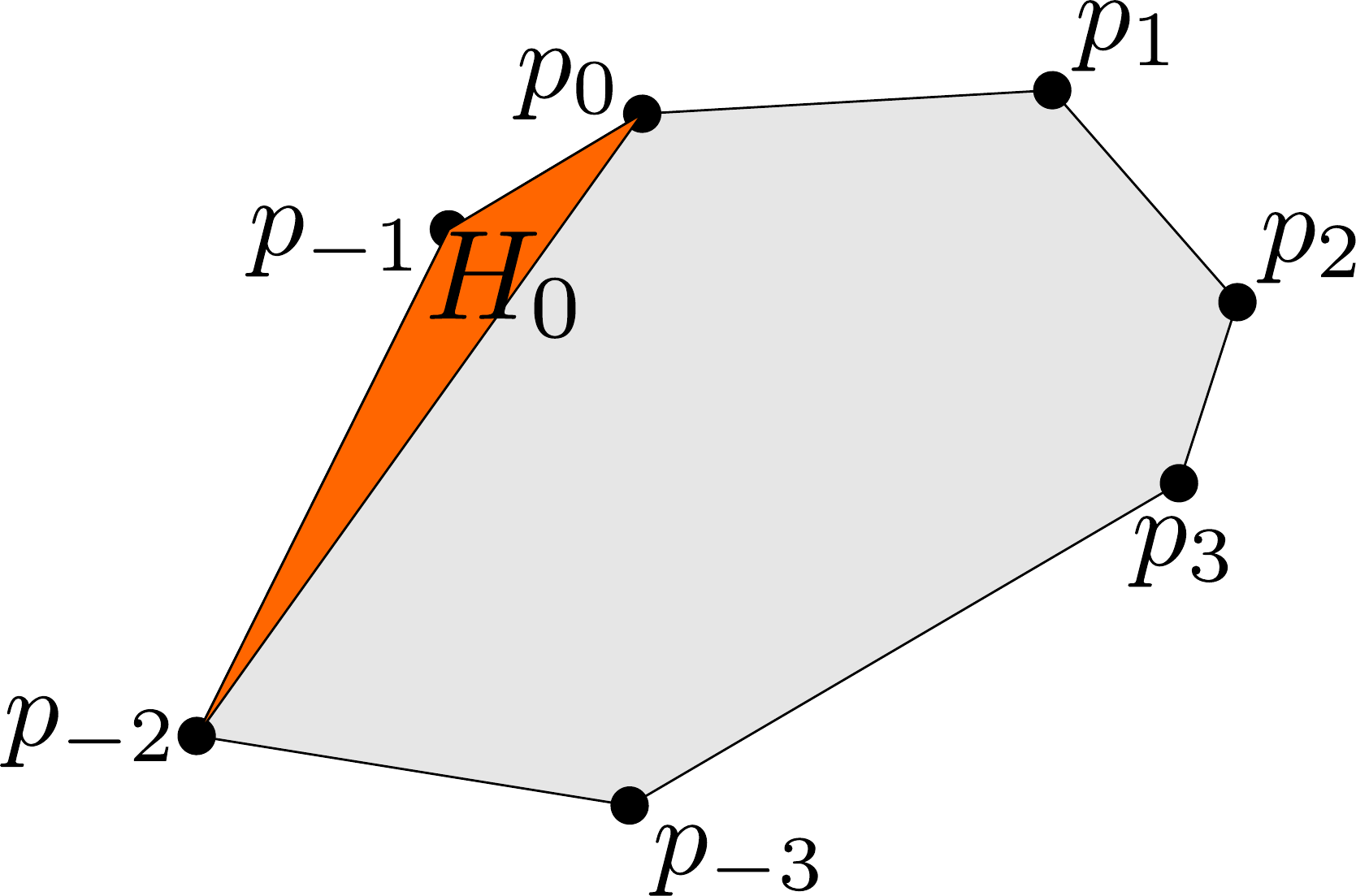}\\
$E_0$&$F_0$&$G_0$&$H_0$
\end{tabular}
\caption{The determinants involved in Lemma~\ref{lem:invariant}.}\label{fig:invariant}
\end{figure} 
 
To see \eqref{eq:heptagonidentity}, we show the stronger result that both
\begin{align}
\sum_{i\in \ZZ/7\ZZ}A_iB_i& =\sum_{i\in \ZZ/7\ZZ}G_iH_i \qquad \text{ and }\label{eq:identity1}\\\sum_{i\in \ZZ/7\ZZ}E_iF_i&=\sum_{i\in \ZZ/7\ZZ}C_iD_i.\label{eq:identity2}
\end{align}

Indeed, the identity \eqref{eq:identity1} is easy, because
$A_iB_i=G_{i+3}H_{i+3}$ for all $i\in \ZZ/7\ZZ$ since $A_i=G_{i+3}$ and $B_i=H_{i+3}$. 
 
Moreover, it is straightforward to check that
\begin{align}
C_i&=E_{i-2},\label{eq:Ci}
\end{align}
and it is also not hard to see that
\begin{equation}
 D_i+C_{i-3}=F_{i-2}+E_{(i+3)-2}.\label{eq:square}
 \end{equation}
Finally, we subtract the right-hand side of \eqref{eq:identity2} from the left hand side:
 \begin{align*}
 & \sum_{i\in \ZZ/7\ZZ}E_iF_i -\sum_{i\in \ZZ/7\ZZ}C_iD_i
  \ = \ 
  \sum_{i\in \ZZ/7\ZZ}E_{i-2}F_{i-2}-\sum_{i\in \ZZ/7\ZZ}C_iD_i
  \\ &\stackrel{\phantom{\eqref{eq:Ci}}}{=}
  \sum_{i\in \ZZ/7\ZZ}E_{i-2}(F_{i-2}+E_{i+1}-E_{i+1})
  -\sum_{i\in \ZZ/7\ZZ}C_i(D_i+C_{i-3}-C_{i-3})
  \\ &
  \stackrel{\eqref{eq:Ci}}{=}
  \sum_{i\in\ZZ/7\ZZ}C_i
  \underbrace{\big(F_{i-2}+E_{i+1}-D_i-C_{i-3}\big)}_{{}=0 \text{ by }\eqref{eq:square}}
  -\sum_{i\in \ZZ/7\ZZ}E_{i-2}E_{i+1}
  +\sum_{i\in \ZZ/7\ZZ}C_iC_{i-3}
  \\ &\stackrel{\eqref{eq:Ci}}{=}
  -\sum_{i\in \ZZ/7\ZZ}C_{i}C_{i+3} 
  +\sum_{i\in \ZZ/7\ZZ} C_iC_{i-3} \ = \ 0,
 \end{align*}
and this concludes our proof of~\eqref{eq:identity2}.
\end{proof}

\begin{observation}
In contrast to Proposition~\ref{prop:noncrossingstandardization}, there exist 
heptagons with $6$~crossing standardization lines.
For example, the convex hull of
\begin{align*}
p_0&=(\tfrac{7}{5},\tfrac{1}{2}),&p_1&=(\tfrac{6}{5},\tfrac{1}{10}), &p_2&=(1,0), &p_3&=(0,0), \\p_{-3}&=(0,1), &p_{-2}&=(1,1), &p_{-1}&=(\tfrac{6}{5},\tfrac{9}{10})\end{align*}
has $6$ crossing standardization lines (Figure~\ref{fig:6crossings}). Notice that the symmetry about the $x$-axis makes $\ell_2$ and $\ell_{-2}$ coincide.
\end{observation}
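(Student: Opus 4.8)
The plan is to confirm the claim by a direct but symmetry-reduced computation. The configuration is invariant under the reflection $\sigma$ in the horizontal line $y=\tfrac12$ (the symmetry axis of $P$): one checks from the coordinates that $\sigma(p_i)=p_{-i}$ for every $i\in\ZZ/7\ZZ$, so $\sigma(P)=P$. Since the standardization construction is built from joins and meets and $\sigma$ is orientation-reversing, the definitions give $\sigma(p_i^+)=p_{-i}^-$ and $\sigma(p_i^-)=p_{-i}^+$, whence $\sigma(\ell_i)=\sigma(p_i^+\wedge p_i^-)=p_{-i}^-\wedge p_{-i}^+=\ell_{-i}$. As $\sigma$ is an isometry fixing $P$, the line $\ell_i$ meets $P$ if and only if $\ell_{-i}$ does. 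It therefore suffices to determine the crossing status of $\ell_0,\ell_1,\ell_2,\ell_3$.

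First I would compute the points $p_i^\pm$ and the lines $\ell_i=p_i^+\wedge p_i^-$ directly from the coordinates using the meet/join formulas. Two of them come out especially cleanly: $\ell_0$ is the vertical line $x=\tfrac72$, and $\ell_2$ is the vertical line $x=\tfrac{22}{25}$. Every vertex of $P$ has first coordinate at most $\tfrac75<\tfrac72$, so $\ell_0$ lies entirely to the right of $P$; hence $\ell_0\cap P=\emptyset$ and $\ell_0$ is non-crossing. By contrast $\tfrac{22}{25}\in(0,1)$, and at every such $x$ the polygon runs from the edge $\li{p_2}{p_3}$ at $y=0$ up to the edge $\li{p_{-3}}{p_{-2}}$ at $y=1$, so the vertical line $\ell_2$ cuts across $P$ and is crossing. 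Moreover, being vertical, $\ell_2$ is fixed by $\sigma$, and since $\sigma(\ell_2)=\ell_{-2}$ we obtain $\ell_2=\ell_{-2}$, which is exactly the coincidence recorded in the statement.

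For $\ell_1$ and $\ell_3$ I would simply exhibit an explicit interior point of $P$ on each line. For $\ell_1$ this is the point it meets at $x=\tfrac65$, which lies strictly between $p_1$ and $p_{-1}$ on the chord $\conv\{p_1,p_{-1}\}$ of $P$; for $\ell_3$ one finds a point with $0<x<1$, where $P$ again spans all of $0<y<1$, so the witnessed point is interior. Both lines are thus crossing. Combining these four determinations with the symmetry $\ell_i\mapsto\ell_{-i}$, the six lines $\ell_{\pm1},\ell_{\pm2},\ell_{\pm3}$ all cross $P$ while $\ell_0$ does not. This yields exactly six crossing standardization lines, in harmony with Proposition~\ref{prop:noncrossingstandardization}, which rules out a seventh. (Equivalently, one could decide each case by evaluating the determinant inequalities of Lemma~\ref{lem:alg_characterization}.)

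There is no genuine obstacle: the whole argument is a finite, elementary computation with the given rational coordinates. The only two points deserving care are (i) verifying the equivariance $\sigma(\ell_i)=\ell_{-i}$ of the standardization construction, which is what legitimizes halving the casework, and (ii) carrying the computation of $\ell_2$ far enough to see that it is vertical, since this verticality is precisely what forces $\ell_2=\ell_{-2}$; everything else reduces to comparing an exhibited point against the convex polygon $P$.
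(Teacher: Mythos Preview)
Your verification is correct and is essentially what the paper does: the observation is stated there without proof beyond a figure, so a direct computation of the $\ell_i$ together with the symmetry reduction is exactly the intended justification. Your computations of $\ell_0$ as the line $x=\tfrac72$ and of $\ell_2$ as the line $x=\tfrac{22}{25}$ are right, the equivariance $\sigma(\ell_i)=\ell_{-i}$ is correctly derived from $\sigma(p_i)=p_{-i}$, and the coincidence $\ell_2=\ell_{-2}$ follows as you say because $\ell_2$ is vertical; note also that the symmetry is about the line $y=\tfrac12$ (the paper's phrase ``about the $x$-axis'' is a slip).
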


\begin{figure}[htpb]
 \centering
 \includegraphics[width=.6\linewidth]{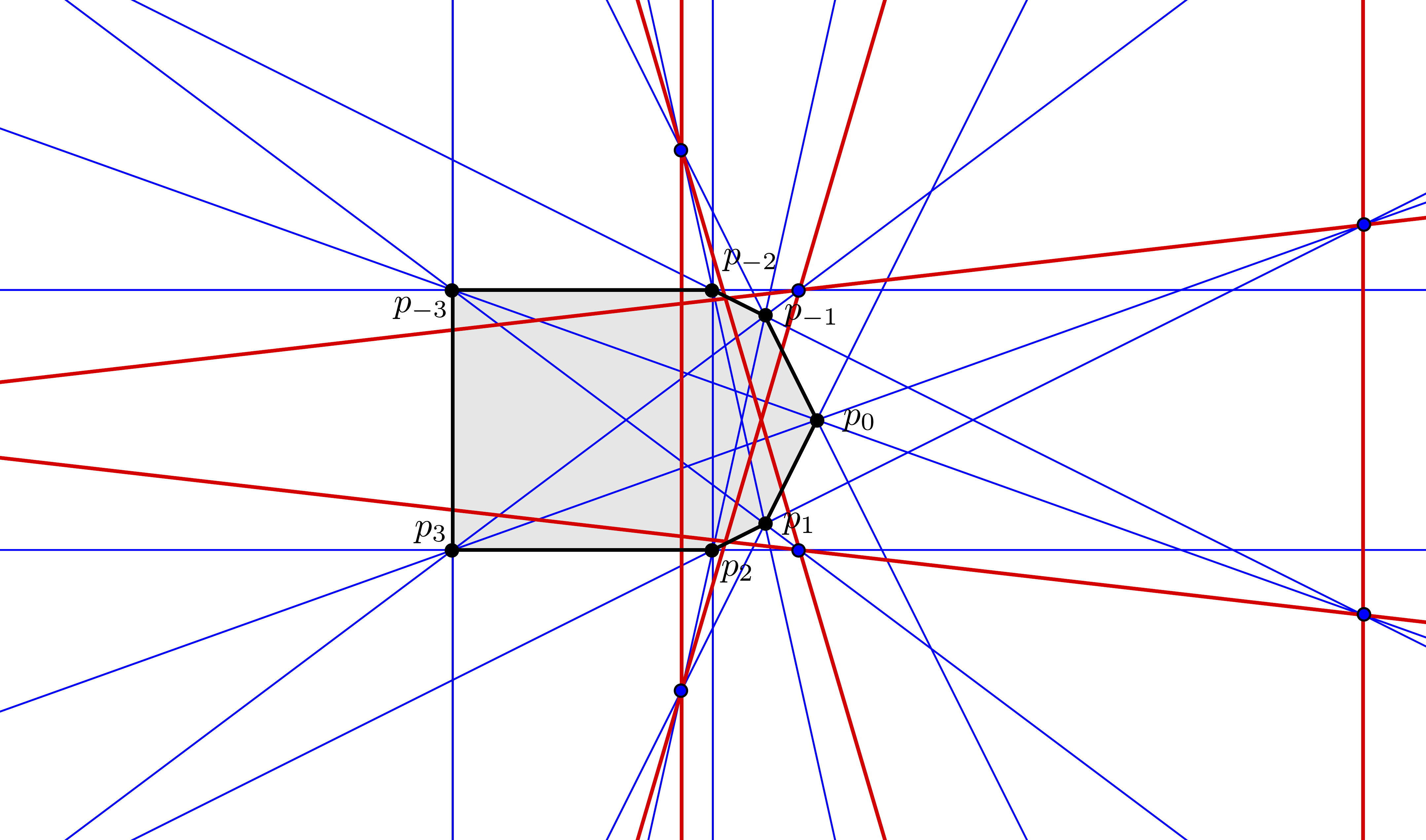}
 \caption{A heptagon with $6$ crossing standardization
lines.}\label{fig:6crossings}
\end{figure}

\subsection{The intersection complexity of heptagons}

Using projective transformations to standardize heptagons is the last step towards
Theorem~\ref{thm:icheptagon}.

\begin{lemma}\label{lem:projective}
  Projective equivalence preserves intersection complexity.
\end{lemma}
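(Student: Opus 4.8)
The plan is to show that intersection complexity is invariant under projective equivalence by exhibiting, for any projective transformation $T$ sending a polygon $P$ to $P'=T(P)$ and any extension realizing $\ic(P)$, a corresponding extension of $P'$ with the same number of vertices. First I would recall the setup from Proposition~\ref{prop:icstandardheptagon}: $\ic(P)$ equals the minimal number of vertices of a $3$-polytope $Q$ (more generally a $d$-polytope) whose intersection with an affine plane $H$ equals $P\times\{0\}$. A projective transformation of the plane $\PP^2$ containing $P$ is induced by an invertible linear map of $\RR^3$; the natural idea is to lift this linear map to act on the ambient space of $Q$ and show that the image of $Q$ is again a polytope with the same vertex count that cuts out $P'$ in the image of $H$.

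The key steps, in order, are as follows. Let $\widetilde T\in\mathrm{GL}_3(\RR)$ be a matrix representing the projective transformation $T$ acting on the projective plane in which $P$ lives. I would embed the cutting plane $H\subset\RR^3$ projectively so that $P\subset H$ corresponds to the copy of $\PP^2$ under consideration, and extend $\widetilde T$ to a linear automorphism $\widehat T$ of the ambient $\RR^{d}$ (here $d=3$) that restricts to $\widetilde T$ on $H$ and is compatible with the halfspace structure. Because linear (and, after the projective-to-affine passage, admissible projective) maps send polytopes to polytopes and vertices to vertices bijectively, $Q':=\widehat T(Q)$ is again a $3$-polytope with exactly as many vertices as $Q$. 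The plane $H$ is sent to a plane $H'=\widehat T(H)$, and since intersection commutes with the bijection $\widehat T$ we get $Q'\cap H'=\widehat T(Q\cap H)=\widehat T(P\times\{0\})=P'\times\{0\}$. Hence $\ic(P')\leq\ic(P)$, and applying the same argument to $T^{-1}$ yields the reverse inequality, giving equality.

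The main obstacle I anticipate is handling the projective (as opposed to merely affine) nature of $T$ cleanly. A general projective transformation of $\PP^2$ can send finite points to the line at infinity, so one cannot simply conjugate an affine realization of $Q$ by an affine map and be done; the standardization step earlier in the paper precisely sends a non-crossing standardization line to infinity, so the transformations of genuine interest are not affine. The careful point is therefore to argue that an \emph{admissible} projective transformation—one whose exceptional hyperplane (the preimage of the plane at infinity) misses $P$, or more precisely misses the relevant polytope $Q$—preserves convexity and the face lattice, so that $\widehat T(Q)$ is still a bounded convex polytope with unchanged combinatorics rather than an unbounded polyhedron. I would phrase this by noting that the standardization line is non-crossing and lies outside $P$, so the associated projective transformation may be taken admissible for the whole section, and then invoke the standard fact (projective transformations admissible for a polytope preserve its combinatorial and section structure) to conclude. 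The remaining verifications—that vertex count is preserved and that the section is carried to the image section—are routine once admissibility is established.
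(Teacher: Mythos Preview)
Your approach is essentially the same as the paper's: extend the projective transformation on the section flat $H$ to a projective transformation $\tau$ of the ambient $\RR^d$ (the paper specifies $\tau|_H=\sigma$ and $\tau$ acting as the identity on $H^\perp$), then observe that $\tau(Q)\cap H=\sigma(P)$. The admissibility issue you correctly flag is not fully resolved in your sketch---knowing the exceptional line misses $P$ does not by itself guarantee the exceptional hyperplane of the extended map misses $Q$---but the paper's proof glosses over exactly the same point; the easy fix neither of you spells out is to first scale $Q$ toward $H$ so it lies in a tubular neighborhood of $P$ disjoint from that hyperplane.
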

\begin{proof}
Let $\sigma:P_1\to P_2$ be a projective transformation between $k$-dimensional
polytopes. Let $Q_1\subset\RR^d$ be a polytope with $\ic(P_1)$ many vertices and
let~$H$ be an affine $k$-flat such that $Q_1\cap H=P_1$. Finally, let $\tau$ be a
projective transformation of~$\RR^d$ that leaves invariant both $H$ and its
orthogonal complement, and such that $\tau|_H=\sigma$. Then $\tau(Q_1)\cap
H=\sigma (P_1)=P_2$.
\end{proof}

\begin{lemma}\label{lem:icheptagon}
 Any heptagon $P$ is a section of a $3$-polytope with no more than $6$ vertices.
\end{lemma}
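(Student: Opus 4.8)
The plan is to assemble Lemma~\ref{lem:icheptagon} directly from the pieces already proved in this section, since at this point almost all of the work is done. The proof should be a short chain: combine the structural result that every heptagon has a non-crossing standardization line (Proposition~\ref{prop:noncrossingstandardization}), the characterization that having such a line is equivalent to being projectively equivalent to a standard heptagon (Lemma~\ref{lem:converttostandard}), the fact that standard heptagons have intersection complexity at most~$6$ via an explicit $6$-vertex $3$-polytope (Proposition~\ref{prop:icstandardheptagon}), and finally the invariance of intersection complexity under projective equivalence (Lemma~\ref{lem:projective}).

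Concretely, I would argue as follows. Let $P$ be an arbitrary heptagon. By Proposition~\ref{prop:noncrossingstandardization}, $P$ has at least one non-crossing standardization line. By Lemma~\ref{lem:converttostandard}, this is equivalent to $P$ being projectively equivalent to a standard heptagon $P'$; that is, there is a projective transformation $\sigma$ with $\sigma(P)=P'$. By Proposition~\ref{prop:icstandardheptagon}, the standard heptagon $P'$ is a section of a $3$-polytope $Q'$ with $6$ vertices, so $\ic(P')\leq 6$. Finally, by Lemma~\ref{lem:projective}, projective equivalence preserves intersection complexity, so $\ic(P)=\ic(P')\leq 6$, and the witnessing extension is again a $3$-polytope (the construction of Lemma~\ref{lem:projective} applies $\tau$ to $Q'$, keeping the dimension of the ambient space and of the cutting flat fixed). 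This exhibits $P$ as a section of a $3$-polytope with at most $6$ vertices.

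Since every ingredient is a previously established statement, I expect no genuine obstacle here: the lemma is essentially a corollary, and the only care needed is bookkeeping to confirm that the polytope produced after transporting $Q'$ through $\tau$ is still $3$-dimensional with $6$ vertices (which follows because $\tau$ is a projective transformation of the ambient space fixing the cutting flat, and hence maps the $3$-polytope $Q'$ to a combinatorially equivalent $3$-polytope with the same number of vertices). The one point I would double-check is that the admissibility of the projective transformations is respected throughout — i.e., that $\sigma$ and its extension $\tau$ do not push relevant faces to infinity in a way that destroys convexity of the section — but this is exactly what Lemma~\ref{lem:projective} guarantees, so it requires no new work.
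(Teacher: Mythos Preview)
Your proposal is correct and follows essentially the same route as the paper: invoke Proposition~\ref{prop:noncrossingstandardization} to get a non-crossing standardization line, use Lemma~\ref{lem:converttostandard} to pass to a standard heptagon, apply Proposition~\ref{prop:icstandardheptagon} for the explicit $6$-vertex $3$-polytope, and transport back via Lemma~\ref{lem:projective}. The paper's proof is just a terser statement of exactly this chain.
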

\begin{proof}
 Let $P$ be a heptagon. By Proposition~\ref{prop:noncrossingstandardization}
it has a non-crossing standardization line, which implies that $P$ is
projectively equivalent to a standard heptagon by
Lemma~\ref{lem:converttostandard}. 
Our claim follows by combining Lemma~\ref{lem:projective} with
Proposition~\ref{prop:icstandardheptagon}.
\end{proof}

The combination of this lemma with the lower bound of Proposition~\ref{prop:ic3bound}
finally yields our claimed result.

\begin{theorem}\label{thm:icheptagon}
Every heptagon has intersection complexity $6$.
\end{theorem}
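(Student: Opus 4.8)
The final statement to prove is Theorem~\ref{thm:icheptagon}: \emph{Every heptagon has intersection complexity $6$.}

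The plan is to combine the upper and lower bounds that have been painstakingly established in the preceding sections, so that the actual proof is a short sandwiching argument. First I would establish the upper bound $\ic(P)\le 6$. This is exactly the content of Lemma~\ref{lem:icheptagon}, which I may assume: every heptagon is a section of a $3$-polytope with at most $6$ vertices, and since any extension realizing a section gives an upper bound on the intersection complexity, it follows that $\ic(P)\le 6$. Thus no new work is needed for this direction beyond citing the lemma.

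Next I would establish the matching lower bound $\ic(P)\ge 6$. For this I invoke Proposition~\ref{prop:ic3bound}, which states that no $n$-gon is a section of a $3$-polytope with fewer than $\fceil{n+4}{2}$ vertices. Setting $n=7$ gives $\fceil{11}{2}=6$, so any $3$-polytope having the heptagon $P$ as a section must have at least $6$ vertices. However, a subtlety is that $\ic(P)$ is defined as the minimal number of vertices over \emph{all} polytopes (of any dimension) having $P$ as a section, not merely $3$-polytopes; Proposition~\ref{prop:ic3bound} only directly controls the $3$-dimensional case. I would therefore need to argue that a section of a higher-dimensional polytope with fewer than $6$ vertices is impossible. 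The cleanest route is to observe that a polytope with at most $5$ vertices in any dimension $d\ge 2$ is a simplex or has at most $5$ facets, so (as already exploited in the proof of Proposition~\ref{prop:ichexagon}) it cannot have a $2$-dimensional section with $7$ edges; more carefully, a $2$-dimensional section of a $d$-polytope $Q$ is cut out by the facets of $Q$ meeting the plane, so it has at most as many edges as $Q$ has facets, and a polytope with $5$ vertices has at most $\binom{5}{d}$-bounded facet count that is easily seen to fall short of $7$ in the relevant low dimensions, while in dimension~$3$ Euler's formula caps the facets at $6$.

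Combining the two bounds yields $6\le\ic(P)\le 6$, hence $\ic(P)=6$, which is the desired conclusion. I expect the main obstacle to be the lower bound, and specifically the passage from the $3$-dimensional statement of Proposition~\ref{prop:ic3bound} to a genuinely dimension-free lower bound on $\ic(P)$; the upper bound and the final arithmetic are routine. The resolution of that obstacle is the elementary observation that a heptagon, having $7$ edges, needs a supporting polytope with at least $7$ facets unless the section passes through a vertex, combined with the vertex-count bookkeeping already done for Proposition~\ref{prop:ic3bound} in dimension~$3$; in higher dimensions one checks directly that too few vertices force too few facets to carve out seven edges.
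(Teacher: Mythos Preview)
Your approach matches the paper's: combine the upper bound from Lemma~\ref{lem:icheptagon} with the lower bound from Proposition~\ref{prop:ic3bound}. You are right to flag that Proposition~\ref{prop:ic3bound} is stated only for $3$-polytopes while $\ic(P)$ minimizes over all dimensions---a point the paper's one-line proof leaves implicit. Your resolution via facet counts is correct in spirit, though the ``$\binom{5}{d}$-bounded'' remark is not the right bound and should be dropped: the clean argument is simply that a polytope with at most $5$ vertices has dimension at most~$4$, hence at most $6$ facets ($2\cdot 5-4=6$ by Euler when $d=3$; a simplex with $5$ facets when $d=4$), so any planar section has at most $6$ edges and cannot be a heptagon. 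The paper already spelled out exactly this case analysis in the proof of Proposition~\ref{prop:ichexagon}.
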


\section{The intersection complexity of $n$-gons}
We can use Lemma~\ref{lem:icheptagon} to derive bounds for the complexity of
arbitrary polygons. We begin with a trivial bound that presents $n$-gons as sections of $3$-polytopes.

\begin{theorem}\label{thm:ic3ngon}
 Any $n$-gon $P$ with $n\geq 7$ is a section of a $3$-polytope with at most $n-1$
vertices. 
\end{theorem}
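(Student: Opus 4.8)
The plan is to leverage the heptagon result (Lemma~\ref{lem:icheptagon}) as a building block and assemble a section realization for a general $n$-gon by ``stacking'' heptagon-sections along a spine, so that most vertices are shared between consecutive blocks. The target count $n-1$ is only one below the trivial bound of $n$, so the saving we need is modest: we must economize exactly one vertex overall compared to the naive construction.

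First I would handle the base cases $7\leq n\leq 8$ (or whatever small range is convenient) directly: for $n=7$, Lemma~\ref{lem:icheptagon} gives a $3$-polytope with $6=n-1$ vertices immediately. The substantive idea is then inductive. Given an $n$-gon $P$ with $n\geq 8$, I would split its boundary into two polygonal chains sharing two endpoints, namely a chain spanning (say) a heptagonal piece and the remaining chain, so that gluing the two corresponding $3$-polytopes along a common edge or common pair of vertices produces a $3$-polytope whose section is $P$. Concretely, one would choose a chord $\conv\{p_0,p_k\}$ of $P$, realize the ``cap'' bounded by $p_0,\dots,p_k$ together with that chord as the section of one $3$-polytope $Q_1$ and the complementary cap as the section of another $3$-polytope $Q_2$, arranging that the two polytopes meet in a shared edge lying on the cutting plane $H$. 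The key economy is that the two endpoints $p_0,p_k$ of the chord are section-vertices produced by a single shared edge $\conv\{q,q'\}$ of the glued polytope crossing $H$, so those boundary vertices are ``free'' and do not each cost a separate vertex in both halves.

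The key steps, in order, are: (1) fix the cutting plane $H$ and place the $n$-gon as $P\times\{0\}$ in it; (2) realize a heptagonal (or near-heptagonal) cap using the explicit construction behind Proposition~\ref{prop:icstandardheptagon}, which already produces a $3$-polytope with one vertex below $H$ serving as an apex and the remaining vertices positioned so that the crossing edges cut out the cap's vertices; (3) show that the remaining $(n-5)$-gon cap can be realized with its own apex structure sharing the spine, and (4) count vertices carefully, checking that the overlap along the shared chord edge removes exactly enough vertices to reach $n-1$. For the dimension and vertex bookkeeping it is cleanest to set this up so that adding roughly seven boundary edges costs roughly six vertices, matching the $\tfrac{6n}{7}$ philosophy of Theorem~\ref{thm:icngon}, but for the weaker bound $n-1$ claimed here a single split suffices and the arithmetic is elementary.

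The main obstacle I anticipate is geometric compatibility at the seam: I must guarantee that the two $3$-polytope pieces $Q_1$ and $Q_2$, each realizing its cap as a section, can be positioned in a common $\RR^3$ so that (a) they genuinely share the spine edge on $H$, (b) their union is convex (or at least that some single convex polytope $Q$ has exactly $P\times\{0\}$ as its section, with no spurious section-vertices appearing from stray edges crossing $H$), and (c) no vertex of $Q$ lands on $H$. Controlling the position of apices above and below $H$ so that \emph{only} the intended edges cross the plane---and that the resulting cross-section is convex with precisely the $n$ vertices of $P$ and no more---is exactly the delicate verification that the extra points stay inside $P$, entirely analogous to the convexity checks ($(a,b+\lambda)\in P$, etc.) at the end of the proof of Proposition~\ref{prop:icstandardheptagon}. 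Once that placement is pinned down, the vertex count and the conclusion $\ic(P)\leq n-1$ follow directly.
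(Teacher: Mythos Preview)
Your plan is far more elaborate than what the statement requires, and the gluing step you flag as the ``main obstacle'' is a genuine gap that you never close: arranging two separately constructed $3$-polytopes to share an edge while keeping their union convex and controlling all edge--plane crossings is nontrivial, and nothing in the paper's toolkit does this for you.

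More importantly, you are missing a one-line idea that sidesteps all of it. Since the target is only $n-1$ vertices, you need to save exactly one vertex \emph{once}, not repeatedly. The paper simply inducts on $n$: for $n=7$ use Lemma~\ref{lem:icheptagon}; for $n\ge 8$, delete one vertex $p=(a,b)$ of $P$ to get an $(n-1)$-gon $P'$, apply the induction hypothesis to obtain a $3$-polytope $Q'$ with at most $n-2$ vertices and $Q'\cap H_0=P'\times\{0\}$, and then set $Q=\conv\bigl(Q'\cup\{(a,b,0)\}\bigr)$. The new vertex sits \emph{on} the cutting plane, so $Q\cap H_0=\conv\bigl((P'\times\{0\})\cup\{(a,b,0)\}\bigr)=P\times\{0\}$, and $Q$ has at most $n-1$ vertices. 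No seam, no convexity check beyond the trivial one, no risk of spurious crossings. Your instinct to split $P$ and merge pieces in $\RR^3$ is the right idea for stronger bounds (and is essentially what Lemma~\ref{lem:icunion} formalizes in higher dimension for Theorem~\ref{thm:icngon}), but for the weak bound here it is unnecessary.
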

\begin{proof}
The proof is by induction. The case $n=7$ is Lemma~\ref{lem:icheptagon}. For
$n\ge8$, let $x=(a,b)$ be a vertex of $P$, and consider the $(n-1)$-gon $P'$
obtained by taking the convex hull of the remaining vertices of $P$. By
induction there is a $3$-polytope  $Q'$ with at most $n-2$ vertices such that
$Q'\cap H_0=P'\times \{0\}$, where $H_0=\set{(x,y,z)\in\RR^3}{z=0}$. Then
$Q=\conv\big(Q'\cup (a,b,0)\big)$ satisfies $Q\cap H_0=P\times \{0\}$, and has $n-1$ vertices.
\end{proof}

\begin{question}
 Which is the smallest $f(n)$ such that any $n$-gon is a section of a $3$-polytope with at most $f(n)$ vertices? Is $f(n)\sim \frac23 n$?
\end{question}

We can derive more interesting bounds when we allow ourselves to increase the
dimension. We only need the following result (compare \cite[Proposition~2.8]{ThomasParriloGouveia2013}).
\begin{lemma}\label{lem:icunion}
 Let $P_1$ and $P_2$ be polytopes in $\RR^d$, and let $P=\conv(P_1\cup P_2)$. 
 If $P_i$ is a section of a $d_i$-polytope with $n_i$ vertices for $i=1,2$, then $P$ is a section of a $(d_1+d_2-d)$ polytope with not more than $n_1+n_2$ vertices. In particular,  $\ic(P)\leq \ic(P_1)+\ic(P_2)$.
\end{lemma}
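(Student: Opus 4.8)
The plan is to prove Lemma~\ref{lem:icunion} by constructing an explicit extension of $P=\conv(P_1\cup P_2)$ from the given extensions of $P_1$ and $P_2$, exploiting the fact that taking a convex hull of a union corresponds to a join-like construction on the extending polytopes. Suppose $Q_i\subset\RR^{d_i}$ is a polytope with $n_i$ vertices and $H_i\subset\RR^{d_i}$ an affine flat of dimension $d$ with $Q_i\cap H_i=P_i$, for $i=1,2$. After applying an affine isomorphism we may assume both $H_1$ and $H_2$ are identified with the common ambient copy of $\RR^d$ containing $P_1$ and $P_2$; concretely, write $\RR^{d_i}=\RR^d\times\RR^{d_i-d}$ so that $H_i=\RR^d\times\{0\}$. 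The key idea is to glue the two extensions along this shared $\RR^d$ while keeping their complementary directions linearly independent, so that the ambient dimension of the combined object is $d+(d_1-d)+(d_2-d)=d_1+d_2-d$.

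The main construction I would carry out is the following. Embed both $Q_1$ and $Q_2$ into $\RR^{d_1+d_2-d}=\RR^d\times\RR^{d_1-d}\times\RR^{d_2-d}$ by placing $Q_1$ in the first two factors (with zero in the last) and $Q_2$ in the first and third factors (with zero in the middle), so that the two embedded copies share precisely their common slice $\RR^d\times\{0\}\times\{0\}$. Define $Q:=\conv(Q_1\cup Q_2)$ inside $\RR^{d_1+d_2-d}$, and let $H:=\RR^d\times\{0\}\times\{0\}$. Then $Q$ has at most $n_1+n_2$ vertices, since its vertex set is contained in the union of the vertex sets of the two embedded copies. I would then verify that $Q\cap H=P$. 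One inclusion is immediate: since $P_i=Q_i\cap H_i$ sits inside both $Q$ and $H$, we get $P_i\subseteq Q\cap H$ for each $i$, and because $Q\cap H$ is convex it contains $\conv(P_1\cup P_2)=P$.

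The reverse inclusion $Q\cap H\subseteq P$ is the step I expect to be the main obstacle, because it requires controlling which points of the join lie in the shared slice. I would argue as follows: any point $x\in Q$ is a convex combination $x=\theta_1 u_1+\theta_2 u_2$ with $u_i\in Q_i$ (embedded) and $\theta_1+\theta_2=1$, $\theta_i\geq 0$. Writing out the middle and last coordinate blocks, the condition $x\in H$ forces $\theta_1(u_1)_{\text{mid}}=0$ and $\theta_2(u_2)_{\text{last}}=0$ separately, since the two complementary blocks are supported on disjoint coordinate sets. If both $\theta_i>0$, this forces each $u_i$ to lie in its slice $H_i$, hence $u_i\in P_i$, so $x\in\conv(P_1\cup P_2)=P$; the boundary cases $\theta_i=0$ reduce to $x\in Q_i\cap H=P_i\subseteq P$. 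This establishes $Q\cap H=P$ and the dimension and vertex counts give the statement. The final sentence $\ic(P)\leq\ic(P_1)+\ic(P_2)$ then follows by applying this with $d_1=\dim P_1$, $d_2=\dim P_2$ and taking $Q_i$ to be minimal extensions, noting that the number of vertices of the resulting $Q$ bounds $\ic(P)$.
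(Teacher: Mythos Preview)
Your construction is exactly the paper's: embed $Q_1$ and $Q_2$ into $\RR^d\times\RR^{d_1-d}\times\RR^{d_2-d}$ along complementary coordinate blocks, take their convex hull $Q$, and intersect with $H=\RR^d\times\{0\}\times\{0\}$; the paper simply asserts $Q\cap H=P$, whereas you supply the verification of both inclusions. (Your closing phrase ``$d_i=\dim P_i$'' is a slip---the $d_i$ are the ambient dimensions of the chosen extensions $Q_i$, not of the $P_i$---but the intended argument, taking $Q_i$ minimal and bounding vertices, is clear and correct.)
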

\begin{proof}
For $i=1,2$, let $Q_i$ be a polytope in $\RR^{d_i}$ with $n_i$ vertices 
and such that $Q_i\cap H_i=P_i$,
where $H_i=\set{x\in \RR^d}{x_j =0 \text{ for }d_i-d< j\leq d}$ is the
$d$-flat that contains the points with vanishing last $d_i-d$ coordinates.
Now consider the following embeddings of $Q_1$ and $Q_2$ in $\RR^{d_1+d_2-d}$:
\begin{itemize}
 \item for $q\in Q_1$ let $f_1(q)=(q_1,\dots,q_{d},q_{d+1},\dots,q_{d_1},
0,\dots,0)$, and
 \item for $q\in Q_2$ let
$f_2(q)=(q_1,\dots,q_{d},0,\dots,0,q_{d+1},\dots,q_{d_2})$.
\end{itemize}
Finally, consider the polytope $Q:=\conv\big(f_1(Q_1)\cup f_2(Q_2)\big)$,
which has at most $n_1+n_2$ vertices, and the $d$-flat $H:=\set{x\in\RR^{d_1+d_2-d}}{x_j=0 \text{ for }d<j\leq d_1+d_2-d}$;
then $P=Q\cap H$.
\end{proof}

\begin{theorem}\label{thm:icngon}
Any $n$-gon with $n\geq 7$ is a section of a $(2+\ffloor{n}{7})$-dimensional
polytope with at most $\fceil{6n}{7}$ vertices. In particular, $\ic(P)\leq
\fceil{6n}{7}$.
\end{theorem}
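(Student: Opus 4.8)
The plan is to combine the heptagon base case (Lemma~\ref{lem:icheptagon}) with the gluing construction of Lemma~\ref{lem:icunion} in a greedy, modular fashion. The key observation is that Lemma~\ref{lem:icunion} lets us take the convex hull of two polytopes that are sections of low-dimensional polytopes and control both the dimension and the vertex count of the resulting extension: dimensions combine as $d_1+d_2-d$ (with $d=2$ here, since all pieces are planar sections) and vertex counts add. So I would decompose an arbitrary $n$-gon $P$ into overlapping heptagonal chunks, realize each chunk cheaply via Lemma~\ref{lem:icheptagon}, and then fuse them.

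Concretely, write $n = 7q + r$ with $0 \le r < 7$. First I would partition the boundary of $P$ into $q$ consecutive arcs of seven vertices each (allowing the last piece to absorb the remainder), where consecutive arcs share one vertex so that the convex hulls of the pieces reassemble all of $P$. Each full seven-vertex piece is a heptagon, hence by Lemma~\ref{lem:icheptagon} it is a section of a $3$-polytope with at most $6$ vertices. I would then iterate Lemma~\ref{lem:icunion} to glue these $q$ heptagonal sections together. At each gluing step the base dimension is $d=2$, so if $P_1$ sits in dimension $d_1$ and $P_2=$ a fresh heptagon sits in dimension $3$, the union is a section of a $(d_1 + 3 - 2) = (d_1+1)$-dimensional polytope, and the vertex counts add. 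Starting from dimension $3$ and gluing in $q-1$ further heptagons raises the dimension to $3 + (q-1) = q+2 = 2 + \ffloor{n}{7}$, matching the claimed dimension, while the vertex count accumulates to roughly $6q$.

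The arithmetic bookkeeping is the step that needs care, and is where I expect the main (though still routine) obstacle to lie: one must verify that the vertex count comes out to exactly $\fceil{6n}{7}$ after accounting for the shared vertices between consecutive arcs and for the leftover remainder $r$. The cleanest way to handle this is to bound the total directly rather than tracking shared vertices: if $P$ is covered by pieces $P_1,\dots,P_t$ with $\ic(P_j) \le 6$ whenever $P_j$ has at most seven vertices, then by the ``in particular'' clause $\ic(P)\le\sum_j \ic(P_j)$, and choosing a covering by $\ceil{n/7}$ heptagonal (or smaller) pieces gives $\ic(P)\le 6\ceil{n/7}=\fceil{6n}{7}$. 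For the finer statement about dimension and simultaneous vertex control, I would instead apply the dimension-aware form of Lemma~\ref{lem:icunion} iteratively and check the two recurrences $d_{j+1}=d_j+1$ and $v_{j+1}=v_j + 6$ produce the stated values at termination.

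A detail worth stating explicitly is the base of the induction and the treatment of the final, possibly short, piece. When $7 \mid n$ every piece is a genuine heptagon and the count is exactly $6n/7$; when $7 \nmid n$ the last arc is a polygon with fewer than seven vertices, and such a polygon is trivially a planar section of itself (a $2$-polytope), contributing its own vertex count and not raising the dimension. Verifying that this short final piece keeps the dimension at $2+\ffloor{n}{7}$ rather than pushing it up by one, and that the ceiling $\fceil{6n}{7}$ correctly absorbs the few extra vertices from the remainder, completes the argument.
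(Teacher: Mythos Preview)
Your approach is exactly the paper's: decompose $P$ into pieces with at most seven vertices, invoke Lemma~\ref{lem:icheptagon} on the heptagonal pieces, and glue via Lemma~\ref{lem:icunion}. Two small points need cleaning up. First, the identity $6\lceil n/7\rceil=\lceil 6n/7\rceil$ is false (take $n=8$: the left side is $12$, the right is $7$); the sharp bound comes from counting the remainder piece as an $r$-gon contributing exactly $r$ vertices, as you correctly indicate in your final paragraph, which yields $6q+r=\lceil 6n/7\rceil$ when $n=7q+r$ with $1\le r\le 6$. Second, the overlap between consecutive arcs is unnecessary and muddles the count: since $\conv\big(\bigcup_j P_j\big)$ is simply the convex hull of all vertices involved, a \emph{disjoint} partition of the vertex set into $q$ blocks of size $7$ and (when $r\ge 1$) one block of size $r$ already reassembles $P$ and makes both recurrences $d_{j+1}=d_j+1$ and $v_{j+1}=v_j+6$ (with the final $r$-piece adding $0$ to dimension and $r$ to vertices) immediate.
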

\begin{proof}
This is a direct consequence of Lemmas~\ref{lem:icheptagon}
and~\ref{lem:icunion}.
\end{proof}

\begin{question}
 Which is the smallest $f(n)$ such that any $n$-gon is a section of a $g(n)$-dimensional polytope with at most $f(n)$ vertices? Are $f(n)=O(\sqrt{n})$ and $g(n)=O(\sqrt{n})$?
\end{question}

\section*{Acknowledgements}
The authors want to thank G\"unter Ziegler and Vincent Pilaud for many enriching discussions on this subject.

\end{document}